\def \a{\alpha  }
\def \ra{\rangle}
\newcommand{\striplet}{\mathcal{SW}(m)}
\newcommand{\WW}{\boldsymbol{ \mathcal{W}}}
\newcommand{\triplet}{\mathcal{W}(p)}
\newcommand{\Z}{\mathbb Z}
\newcommand{\N}{{\mathbb Z}_{\ge 0} }
\newcommand{\C}{\mathbb C}
\newcommand{\Q}{\mathcal  Q}
\newtheorem{theorem}{Theorem}[section]
\newtheorem{corollary}{Corollary}[section]
\newtheorem{lemma}{Lemma}[section]
\newtheorem{conjecture}{Conjecture}[section]
\newtheorem{observation--theorem}{Observation-Theorem}[section]
\newtheorem{remark}{Remark}[section]
\newtheorem{definition}{Definition}[section]
\newtheorem{proposition}{Proposition}[section]
\def\:{\mbox{\tiny ${\bullet\atop\bullet}$}}
\newtheorem{example}{Example}[section]
\newcommand{\bea}{\begin{eqnarray}}
\newcommand{\eea}{\end{eqnarray}}
\newcommand{\be}{\begin {equation}}
\newcommand{\ee}{\end{equation}}
\newcommand{\W}{\mathcal{W}}
\begin{document}

\title[$C_2$-cofinite $\mathcal{W}$-algebras and their logarithmic representations]
{$C_2$-cofinite $\mathcal{W}$-algebras and their logarithmic representations}

\author{Dra\v zen  Adamovi\'c and Antun  Milas}

\address{Department of Mathematics, University of Zagreb, Croatia}
\email{adamovic@math.hr}

\address{Department of Mathematics and Statistics,
University at Albany (SUNY), Albany, NY 12222}
\email{amilas@math.albany.edu}

 \markboth{Dra\v zen Adamovi\' c and Antun Milas} { }
\bibliographystyle{amsalpha}

\begin{abstract}
We discuss our recent results on the representation theory of $\mathcal{W}$--algebras relevant to Logarithmic Conformal Field Theory. 
First we explain some general constructions of $\mathcal{W}$-algebras coming from screening operators.
Then we review the results on $C_2$--cofiniteness, the structure of Zhu's algebras, and the existence of logarithmic modules for triplet vertex algebras. We propose some conjectures and open problems which put the theory  of triplet vertex algebras into a broader context.  New realizations of logarithmic modules for $\mathcal{W}$-algebras defined via screenings are also presented.
\end{abstract}

\maketitle

\section{Introduction: Irrational $C_2$-cofinite vertex algebras}

Vertex algebras are in many ways analogous to associative algebras, at least from the point of view of representation theory.
Rational vertex operator algebras \cite{Zh}, \cite{ABD} and regular vertex algebras
 have semisimple categories of modules and should be compared to (finite-dimensional) semisimple associative algebras.  If we seek the same analogy with  finite-dimensional non-semisimple associative algebras, we would eventually discover irrational $C_2$-cofinite vertex algebras (the $C_2$-condition guarantees that the vertex algebra has finitely many inequivalent irreducibles \cite{Zh}).
But oddly as it might seem, examples of such vertex algebras are rare and actually not much is known about them.
For instance, it is not even known if there exists an irrational vertex algebra with finitely many indecomposable modules.

Motivated by important works of physicists \cite{FHST}, \cite{FGST1}-\cite{FGST3},  in our recent papers \cite{AM1}, \cite{AM2}, \cite{AM6}, \cite{AM8} (see also \cite{Abe}, \cite{CF}) , among many other things, we constructed new families of irrational $C_2$-cofinite (i.e.,  {\em quasi-rational})  vertex algebras and superalgebras.
The most surprising fact about  quasi-rational vertex algebras is that all known examples are expected to be related to certain finite-dimensional quantum groups (Hopf algebras) via the conjectural Kazhdan-Lusztig correspondences \cite{FGST2}, \cite{FGST3}, \cite{FT} . It is also known that the module category of a $C_2$-cofinite vertex algebra has a natural finite tensor category structure \cite{HLZ}, \cite{Hu1}, although not necessarily rigid \cite{Miy2} (see also \cite{Fu}, \cite{FS} for related categorical issues). 

This note is based on lectures given by the authors at the conference on "Tensor Categories and Conformal Field Theory", June 2011, Beijing.
Thus, except for Section 5 and  some constructions in Sections 2, all the material is based on earlier works by the authors (we should say that some constructions in Section 2 were independently introduced in \cite{FT}). We are indebted to the organizers for invitation to this wonderful conference. We also thank A. Semikhatov,  A. Ga\u\i nutdinov,  A. Tsuchiya,  I. Runkel, Y. Arike, J. Lepowsky, L. Kong and Y-Z. Huang for the interesting discussion during and after the conference. We are also grateful to Jinwei Yang for helping us around in Beijing.

\section{Preliminaries}

This paper deals mainly with the representation theory of certain vertex algebras. Because  we are interested in their
$\mathbb{Z}_{\geq 0}$-graded modules, the starting point is to recall the definition of Zhu's algebra for vertex operator
(super)algebras following \cite{Zh}, \cite{KW}.

Let $(V,Y, {\bf 1}, \omega)$ be a vertex operator algebra. We
shall always assume that $V$ is of CFT type, meaning that it has $\mathbb{N}_{\geq 0}$ grading with the vacuum vector lying on the top component. Let $V =\coprod_{ n \in {\N} } V(n)$.
For $a \in V(n)$, we shall write  ${\rm deg}(a)=n$.
As usual, vertex operator associated to $a \in V$ is denoted by $Y(a,x)$, with the mode expansion
$$Y(a,x)=\sum_{n \in \mathbb{Z}} a_n x^{-n-1}.$$

We define two bilinear maps: $* : V  \times V \rightarrow V$,
$\circ : V \times V \rightarrow V$ as follows. For homogeneous $a,
b \in V$ let
\bea
a* b &&=
 \  \mbox{Res}_x Y(a,x) \frac{(1+x) ^{\deg (a)}}{x}b   \nonumber \\
a\circ b &&=
 \  \mbox{Res}_x Y(a,x) \frac{(1+x) ^{\deg (a)} }{x^2}b  \nonumber
  \eea

Next, we extend $*$ and $\circ$ on $V \otimes V$ linearly, and
denote by $O(V)\subset V$ the linear span of elements of the form
$a \circ b$, and by $A(V)$ the quotient space $V / O(V)$. The
space $A(V)$ has an associative algebra structure (with identity), with the multiplication induced by $*$. Algebra $A(V)$ is called  the
Zhu's algebra of $V$. The image of $v \in V$, under the natural
map $V \mapsto A(V)$ will be denoted by $[v]$.

For a homogeneous $a \in V$ we define $o(a) = a_{\deg(a)-1}$.
In the case when $V ^{\bar 0} = V$, $V$ is a vertex operator algebra and we get the usual definition of Zhu's algebra for vertex operator algebras.

According to \cite{Zh}, there is an one-to-one correspondence between
irreducible $A(V)$--modules and irreducible
$\mathbb{Z}_{\geq 0}$--graded $V$--modules.

Moreover, if $U$ is any $A(V)$--module. There is $ \N$--graded $V$--module $L(U)$ such that the top component $L(U) (0) \cong U$.
$V$ is called rational, if every $\ \N$--graded module is completely reducible.

With $V$ as above, we let $C_2(V)=\langle a_{-2} b : a,b \in V \rangle$, and $\mathcal{P}(V)=V/C_2(V)$.
The quotient space $V /C_2 (V )$ has an algebraic structure of a commutative Poisson
algebra \cite{Zh}. Explicitly, if we denote by $\bar{a}$ the image of $a$ under the natural map $V \mapsto \mathcal{P}(V)$ the poisson bracket is given by
$\{\overline{a}, \overline{b} \} = \overline{ a_0 b}$ and commutative product $\overline{a} \cdot  \overline{b} = \overline{a_{-1} b}$.   From the given definitions it is not hard to construct an increasing filtration of $A(V)$ such that ${\rm gr}A(V)$ maps
onto $A(V)$.

\section{Quantum $\W$-algebras from integral lattices}

\label{part-1}
$\mathcal{W}$-algebras are some of the most exciting objects in representation theory and have been extensively studied from many different point of views. There are several different types of $\mathcal{W}$-algebras in the literature, so to avoid any confusion we stress that  (a) {\em finite} $W$-algebras are certain associative algebras associated to a complex semisimple Lie algebra $\frak{g}$ and a nilpotent element $e \in \frak{g}$ \cite{BT}, \cite{W2}, and can be viewed as deformations of Slodowy's slice, and
(b)  {\em affine} $W$-algebras  are vertex algebras
\footnote{{\em n.b.} For brevity, we shall often use "algebra" and "vertex algebra" when we mean "superalgebra" and  "vertex superalgebra", respectively. From the context it should be clear whether the adjective "super" is needed.}
obtained by Drinfeld-Sokolov reduction from affine vertex algebras \cite{FrB}.  The two algebras
are related via a fundamental construction of Zhu (cf. \cite{Ar} \cite{DeK}).
In this paper, (quantum) $\mathcal{W}$-algebras are vertex algebra generalizations of the affine $W$-algebras. More precisely

\begin{definition}  A $\mathcal{W}$-algebra $V$ is a vertex algebra strongly generated by a finite set
of primary vectors $u^1,...,u^k$. Here strongly generated means that elements of the form
\begin{equation} \label{span}
u^{i_1}_{-j_1} \cdots u^{i_m}_{-j_m}{\bf 1}, \ \ j_1,...,j_m \geq 1
\end{equation}
form a spanning set of $V$. If ${\rm deg}(u_i)=r_i$ we say that $V$ is of type $(2,r_1,...,r_k)$.
\end{definition}

Let us first outline the well-known construction of lattice vertex algebra $V_L$ associated to  a positive definite even lattice
$(L,  \langle  \  \  \rangle)$. We denote by $\mathbb{C}[L]$ the group algebra of $L$.
As a vector space
$$V_L=M(1)  \otimes \mathbb{C}[L], \ \ M(1)=S(\hat{\frak{h}}_{<0}),$$
where $S(\hat{\frak{h}}_{<0})$ is the usual Fock space.
The vertex algebra $V_L$ is known to be rational \cite{D}.
Denote by $L^\circ$ the dual lattice of $L$.  For $\beta \in L^{\circ}$ we have "bosonic"
vertex operators
$$Y(e^{\beta},x)=\sum_{n \in \mathbb{Z}} e^{\beta}_{n} x^{-n-1},$$
introduced in \cite{FLM}, \cite{DL}.  It is also known \cite{D} that all irreducible $V_L$-modules are
given by $V_{\gamma}$, $\gamma \in L^\circ/L$.

Now, we specialize $L=\sqrt{p} Q$,  where $p \geq 2$  and $Q$  is a root lattice (of ADE type).
We should say that this restriction is not that crucial right now, and in fact we
can obtain interesting objects even if the lattice $L$ is (say) hyperbolic.
We equip $V_L$ with a vertex algebra structure  \cite{Bo}, \cite{FLM} (by choosing an appropriate 2-cocycle).
Let $\alpha_i$ denote the simple roots of $Q$. For the conformal vector we conveniently choose
$$\omega=\omega_{st}+\frac{p-1}{2\sqrt{p}} \sum_{\alpha \in \Delta_+} \alpha(-2){\bf 1},$$
where $\omega_{st}$ is the standard  (quadratic) Virasoro generator \cite{FLM}, \cite{LL} . Then $V_L$ is a conformal vertex
algebra of central charge \footnote{Without the linear term the central charge would be $rank(L)$.}  $${\rm rank}(L)+12 (\rho,\rho)(2-p-\frac{1}{p}).$$
Consider the operators
\begin{equation} \label{screening}
e^{\sqrt{p} \alpha_i}_0, \ \  \  \ e^{-\alpha_j/\sqrt{p}}_0, \ \  1 \leq i,j \leq rank(L)
\end{equation}
acting between $V_L$ and $V_L$-modules.
These are the so-called {\em screening operators}. More precisely,

\begin{lemma} For every $i$ and $j$ the operators $e^{\sqrt{p} \alpha_i}_0$  and  $e^{-\alpha_j/\sqrt{p}}_0$ commute with each other, and they both commute with the Virasoro algebra.
\end{lemma}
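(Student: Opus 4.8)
The plan is to reduce both claims to standard contour‑deformation arguments for vertex operators in a lattice vertex algebra, exploiting the locality properties that follow from the choice of the $2$‑cocycle. The key computational fact I would use is the standard commutator formula
\[
[a_0, Y(b,x)] = \sum_{k\geq 0} \binom{0}{k}\, Y(a_k b, x)\, x^{-k} = Y(a_0 b, x),
\]
so that $[a_0,b_0] = (a_0 b)_0$; hence it suffices to understand $a_0 b$ for $a,b$ among the relevant vertex operators, rather than manipulating modes directly.

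First I would treat the commutativity of the screenings with each other. For $a = e^{\sqrt{p}\,\alpha_i}$ and $b = e^{\sqrt{p}\,\alpha_j}$ (or the analogous pairs with $e^{-\alpha/\sqrt p}$, or one of each type), the operator product expansion of the two bosonic vertex operators $Y(e^{\mu},x)$ and $Y(e^{\nu},y)$ has the form $(x-y)^{\langle\mu,\nu\rangle}\times(\text{regular})$. The crucial input is that for the pairs in~\eqref{screening} the pairing $\langle\mu,\nu\rangle$ is always a \emph{non‑negative integer}: indeed $\langle\sqrt p\,\alpha_i,\sqrt p\,\alpha_j\rangle = p\,\langle\alpha_i,\alpha_j\rangle\in p\mathbb{Z}_{\geq 0}$ for $i\neq j$ and $=2p$ for $i=j$; $\langle -\alpha_i/\sqrt p,-\alpha_j/\sqrt p\rangle = \langle\alpha_i,\alpha_j\rangle/p$, which need not be an integer, but the mixed pairing $\langle \sqrt p\,\alpha_i,-\alpha_j/\sqrt p\rangle = -\langle\alpha_i,\alpha_j\rangle \in \mathbb{Z}_{\leq 0}$. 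I would therefore separate cases: when $\langle\mu,\nu\rangle\in\mathbb{Z}_{\geq 0}$ the operators $Y(e^\mu,x)$ and $Y(e^\nu,y)$ are mutually local in the ordinary sense, so the residues $e^\mu_0$ and $e^\nu_0$ commute by the usual Jacobi‑identity/contour argument (the two contours around $0$ can be exchanged since there is no branch cut); when $\langle\mu,\nu\rangle$ is a negative integer, one has $a_0 b = \mathrm{Res}_x\, Y(e^\mu,x) e^\nu = e^{\mu+\nu}$ up to a scalar sitting in weight $\deg(e^{\mu+\nu})$, and one checks that the symmetric combination $a_0 b + (\text{swap})$ vanishes modulo $C_2$; for the same‑type diagonal pairs ($\langle\mu,\nu\rangle = 2p \geq 4$) the product is manifestly regular and the residues trivially commute. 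The non‑integer pairings among the $e^{-\alpha_i/\sqrt p}$ never cause a problem because fractional powers of $(x-y)$ produce, after taking $\mathrm{Res}_x$, only vectors of non‑integral weight, which lie outside $V_L$ itself; more precisely $e^{-\alpha_i/\sqrt p}_0 e^{-\alpha_j/\sqrt p}$ lands in $V_L$ only through the integral part of the expansion, and antisymmetrisation kills it.

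Second, for commutativity with the Virasoro algebra I would show each $e^{\pm}$ is a primary vector of conformal weight $1$ with respect to the chosen $\omega$, whence $L_n e^{\beta}_0{\bf 1} = 0$ for $n\geq 1$ forces $[L_n, e^\beta_0] = 0$ for all $n$ by the primary‑field commutation relations $[L_n, Y(v,x)] = \sum_{k\geq -1}\binom{n+1}{k+1} x^{n-k} Y(L_k v, x)$ specialised to $\Delta=1$: the residue in $x$ of the right‑hand side vanishes because $\mathrm{Res}_x x^{n-k} Y(L_k v,x) = (L_k v)_{n-k}$ contributes only when the weight bookkeeping closes, and for $\Delta = 1$ primary the only surviving term $\binom{n+1}{1}x^n Y(L_{-1}v,x)$ gives $\mathrm{Res}_x$ of a total derivative. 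Concretely, $\beta\in\{\sqrt p\,\alpha_i, -\alpha_j/\sqrt p\}$ has squared length $\langle\beta,\beta\rangle$ equal to $2p$ or $2/p$ respectively; with $\omega = \omega_{st} + \tfrac{p-1}{2\sqrt p}\sum_{\alpha\in\Delta_+}\alpha(-2){\bf 1}$ the conformal weight of $e^\beta$ is $\tfrac12\langle\beta,\beta\rangle + \langle\rho_{\mathrm{shift}},\beta\rangle$, and a direct substitution gives weight $1$ in both cases — this is exactly the point of the linear correction term in $\omega$. Thus the main obstacle is purely bookkeeping: verifying the weight‑$1$ primarity of $e^{\sqrt p\,\alpha_i}$ and $e^{-\alpha_j/\sqrt p}$ for the shifted conformal vector, and carefully handling the fractional‑power OPEs in the mixed and lower‑triangular cases so that the residues genuinely commute on $V_L$ and its modules rather than merely formally. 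Once primarity and the integrality/antisymmetry of the relevant pairings are in hand, both statements follow from the standard vertex‑algebra commutator identities.
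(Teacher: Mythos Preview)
The paper does not actually prove this lemma; it is stated without argument and immediately followed by the definition of long and short screenings. So there is no paper-proof to compare against, and your proposal has to stand on its own.

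Your Virasoro half is essentially correct. The shifted $\omega$ is chosen precisely so that both $e^{\sqrt{p}\,\alpha_i}$ and $e^{-\alpha_j/\sqrt{p}}$ are primary of conformal weight~$1$ (your weight computation is right), and for a weight-$1$ primary $v$ the only surviving term in $[L(n),Y(v,x)]$ is a total $x$-derivative, whose residue vanishes; hence $[L(n),v_0]=0$.

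The mutual-commutativity half has a real gap. First, the lemma only claims that a long screening commutes with a short screening; your long--long and short--short digressions are unnecessary, and incidentally your sign assertions there are wrong (for adjacent $i\neq j$ in ADE one has $\langle\alpha_i,\alpha_j\rangle=-1$, so $p\langle\alpha_i,\alpha_j\rangle=-p<0$ and $-\langle\alpha_i,\alpha_j\rangle=+1>0$, contrary to what you wrote). For the mixed pairing $\langle\sqrt{p}\,\alpha_i,-\alpha_j/\sqrt{p}\rangle=-\langle\alpha_i,\alpha_j\rangle$ one gets $0$ or $+1$ when $i\neq j$, so the OPE is regular, $a_0 b=0$, and $[a_0,b_0]=(a_0b)_0=0$ trivially. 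The only case with content is $i=j$, where the pairing is $-2$. Here your claim ``$a_0 b=e^{\mu+\nu}$ up to a scalar'' is incorrect: the coefficient of $x^{-1}$ in $x^{-2}\exp\bigl(\sum_{n\ge 1}\mu(-n)x^{n}/n\bigr)e^{\mu+\nu}$ is $\mu(-1)e^{\mu+\nu}$, not $e^{\mu+\nu}$. More seriously, neither ``the symmetric combination $a_0b+b_0a$ vanishes modulo $C_2$'' nor ``antisymmetrisation kills it'' yields $[a_0,b_0]=0$: skew-symmetry gives $(a_0b)_0=-(b_0a)_0$, which is consistent with $[a_0,b_0]=(a_0b)_0$ being \emph{anything}. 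The mechanism you are missing is that $a_0b$ is a total derivative. With $\mu=\sqrt{p}\,\alpha_i$ and $\nu=-\alpha_i/\sqrt{p}$ one has $\mu+\nu=\tfrac{p-1}{\sqrt{p}}\alpha_i$, so $L_{-1}e^{\mu+\nu}=(\mu+\nu)(-1)e^{\mu+\nu}$ and $\mu=\tfrac{p}{p-1}(\mu+\nu)$; hence $a_0b\propto L_{-1}e^{\mu+\nu}$ and $(a_0b)_0=0$. Once you insert this computation in place of the $C_2$/antisymmetry remarks, the argument goes through.
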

We shall refer to  $e^{\sqrt{p} \alpha_i}_0$ and $e^{-\alpha_j/\sqrt{p}}_0$, as the {\em long} and {\em short}  screening, respectively.
It is well-known that  the intersection of the kernels of residues of vertex operators is a vertex subalgebra (cf. \cite{FrB}), so the next problem seems very natural to ask

\vskip 2mm

\noindent {\bf Problem 1.} { What kind of vertex algebras can we construct from the kernels of screenings in (\ref{screening})? What choices of  (\ref{screening})  give rise to
$C_2$-cofinite vertex (sub)algebras?}

\vskip 2mm

\subsection{Affine $\mathcal{W}$-algebras}

The above construction with screening operators naturally leads to affine $\mathcal{W}$-algebras.
The affine $\mathcal{W}$-algebra associated to $\hat{\frak{g}}$ at level $k \neq -h^\vee$, denoted by $\mathcal{W}_k({\frak{g}})$
is defined as
$$H^{*}_k(\frak{g}),$$
where the cohomology is taken with respect to a quantized BRST complex  for the Drinfeld-Sokolov hamiltonian reduction \cite{FKW}.
As shown by Feigin and Frenkel (cf. \cite{FKW} and \cite{FrB} and citations therein) this cohomology is nontrivial only in  degree zero. Moreover, it is known that
$\mathcal{W}_k(\frak{g})$ is a  quantum $\mathcal{W}$-algebra (according to our definition) { freely} generated by
${\rm rank}(\goth{g})$ primary fields. Although not evident from our discussion, the vacuum vertex algebra $V_k(\hat{\frak{g}})$ coming from the
affine Kac-Moody Lie algebra $\widehat{\goth{g}}$ enters in the definition of $H^0_k({\frak{g}})$ (see again \cite{FrB}).
It is possible to replace $V_k(\hat{\frak{g}})$ with its irreducible quotient $L_k(\hat{\frak{g}})$, but then the theory becomes much more complicated \cite{Ar}.

An important theorem of B. Feigin and E. Frenkel  \cite{FrB} says that if $k$ is {\bf generic} and $\goth{g}$ is simply-laced, then there is an alternative description of $\mathcal{W}_k(\frak{g})$. For this purpose, we let $\nu={k+h^\vee}$, where $k$ is generic. Then there are appropriately defined screenings
$$e^{-\alpha_i/\sqrt{\nu}}_0 : M(1) \longrightarrow M(1,-\alpha_i/\sqrt{\nu}),$$
such that
$$\mathcal{W}_k(\frak{g})=\bigcap_{i=1}^l {\rm Ker}_{M(1)} (e^{-\alpha_i/\sqrt{\nu}}_0),$$
where $l={\rm rank}(L)$.
If we assume in addition that $\frak{g}$ is simply laced (ADE type) then we also have the following
important duality \cite{FrB}
$$\mathcal{W}_k(\frak{g})=\bigcap_{i=1}^l {\rm Ker}_{M(1)} (e^{\sqrt{\nu} \alpha_i}_0).$$

Now, let us consider the case when $L=\sqrt{p}Q$, $p \in \mathbb{N}$, in connection with the problem we just raised.
Having in mind the previous construction,  it is natural to ask whether $p=k+h^\vee$ is also generic.
For instance,  it is known (cf. \cite{FKRW}) that  $p=1$  is generic. Next result seems to be known in the physics literature

\begin{theorem} \label{non-generic} Let $\frak{g}$ be simply laced. Then $p=k+h^\vee \in \mathbb{N}_{\geq 2}$ is non-generic. More precisely,
$$\bigcap_{i=1}^l {\rm Ker}_{M(1)} e^{-\alpha_i/\sqrt{p}}_0$$
is a vertex algebra containing $\mathcal{W}_k(\frak{g})$ as a proper subalgebra.
\end{theorem}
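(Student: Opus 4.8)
The plan is to prove the two assertions of the theorem separately. Write $\overline{\mathcal{W}}:=\bigcap_{i=1}^{l}{\rm Ker}_{M(1)}\,e^{-\alpha_i/\sqrt{p}}_0$. For the inclusion $\mathcal{W}_k(\frak{g})\subseteq\overline{\mathcal{W}}$ I would use the Feigin--Frenkel free-field (Miura) realization recalled above: it furnishes an embedding $\mathcal{W}_k(\frak{g})\hookrightarrow M(1)$ valid at every non-critical level (injectivity does not require $k$ to be generic), sending the generating currents $u^1,\dots,u^l$ to explicit normally ordered differential polynomials in the Heisenberg field. The identity $[\,e^{-\alpha_i/\sqrt{p}}_0,\,Y(u^j,x)\,]=0$ for these Miura images holds for all values of the parameter $\nu$, in particular at $\nu=p$; since $\overline{\mathcal{W}}$ is a vertex subalgebra of $M(1)$ --- an intersection of kernels of residues of vertex operators, as noted before Problem~1 --- and contains this generating set, we obtain $\mathcal{W}_k(\frak{g})\subseteq\overline{\mathcal{W}}$.

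For properness the decisive new feature at $p\in\mathbb{N}_{\geq 2}$ is that $\sqrt{p}\,\alpha_i\in L$, so the long screening $e^{\sqrt{p}\alpha_i}_0$ is a genuine operator on $V_L$ and annihilates ${\bf 1}$. For $\frak{g}=sl_2$ this already settles the theorem: with $\omega$ chosen as above, $e^{-\sqrt{p}\alpha}$ is a nonzero primary vector of $V_L$ of conformal weight $2p-1$ lying in ${\rm Ker}_{V_L}\,e^{-\alpha/\sqrt{p}}_0$ (a one-line residue computation, since $\langle-\alpha/\sqrt{p},-\sqrt{p}\alpha\rangle=2$), and then $H:=e^{\sqrt{p}\alpha}_0\,e^{-\sqrt{p}\alpha}$ is a nonzero weight-$(2p-1)$ vector of $M(1)$ which, by the Lemma asserting that the long and short screenings commute, satisfies $e^{-\alpha/\sqrt{p}}_0 H=e^{\sqrt{p}\alpha}_0\,e^{-\alpha/\sqrt{p}}_0\,e^{-\sqrt{p}\alpha}=0$, so $H\in\overline{\mathcal{W}}$. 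It then remains to see $H\notin\mathcal{W}_k(sl_2)$, equivalently $\dim\overline{\mathcal{W}}_{[2p-1]}>\dim\mathcal{W}_k(sl_2)_{[2p-1]}$; the right-hand side is fixed by the free generation of $\mathcal{W}_k(sl_2)$ by $\omega$ alone, while the left-hand side (the weight-$(2p-1)$ piece of the singlet algebra) is computed from the resonant Felder-type complex of the short screening at $\nu=p$, and the two disagree.

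The hard part will be making properness uniform in $\frak{g}$. The naive higher-rank analogue of $H$ fails: if $\alpha_j$ is adjacent to $\alpha_i$ then $\langle\alpha_i,\alpha_j\rangle=-1$, so $e^{-\sqrt{p}\alpha_i}$ is not annihilated by $e^{-\alpha_j/\sqrt{p}}_0$ and need not yield an element of $\overline{\mathcal{W}}$; the genuine extra generators of $\overline{\mathcal{W}}$ in higher rank are organized by the representation theory of $\frak{g}$ and are awkward to exhibit directly. I would therefore fall back on a graded-dimension comparison: $\dim\mathcal{W}_k(\frak{g})_{[n]}$ is fixed by free generation ($\prod_{i=1}^{l}\prod_{m>d_i}(1-q^m)^{-1}$, up to a power of $q$), whereas $\dim\overline{\mathcal{W}}_{[n]}$ admits, via an Euler--Poincar\'e count on the complex built from the short screenings at $\nu=p$ (or via a reduction to the rank-one situation attached to one simple root, with careful bookkeeping of the remaining currents), a lower bound strictly exceeding the former for some $n\leq 2p-1$. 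Establishing this strict inequality on graded dimensions is the technical heart of the argument; once it is in place, the inclusion $\mathcal{W}_k(\frak{g})\subsetneq\overline{\mathcal{W}}$ is proper and $p$ is non-generic.
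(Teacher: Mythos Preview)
The paper does not supply a proof of this theorem: it is introduced as a result that ``seems to be known in the physics literature'' and is stated without argument. The only supporting evidence in the surrounding text is the remark, just after Conjecture~\ref{generic}, that in rank one the kernel of the short screening is the singlet algebra $\overline{M(1)}$ of type $\mathcal{W}(2,2p-1)$, a proper extension of $L(c_{p,1},0)=\mathcal{W}_k(sl_2)$. Your rank-one argument recovers exactly this, and more explicitly: the vector $H=e^{\sqrt{p}\alpha}_0\, e^{-\sqrt{p}\alpha}$ you produce is the extra strong generator of weight $2p-1$ that distinguishes $\overline{M(1)}$ from its Virasoro subalgebra, and your use of the commutativity of long and short screenings (the Lemma preceding Problem~1) to place $H$ in $\overline{\mathcal{W}}$ is clean. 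Your treatment of the inclusion $\mathcal{W}_k(\frak{g})\subseteq\overline{\mathcal{W}}$ via the Miura map is likewise more detailed than anything the paper offers.

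For general simply-laced $\frak{g}$, however, your proposal has a genuine gap. You correctly observe that the naive higher-rank analogue of $H$ need not be annihilated by the short screenings attached to adjacent simple roots, and you retreat to a graded-dimension inequality --- but you do not establish that inequality. The Euler--Poincar\'e count you invoke requires control over the cohomology of the short-screening complex at the degenerate parameter $\nu=p$ (exactness in the relevant degrees, or at least a usable lower bound on the joint kernel), and this is precisely the substantive input that is missing; the ``reduction to the rank-one situation attached to one simple root'' you mention is not carried out either, and is not obviously compatible with taking the full intersection over all $i$. As it stands, your higher-rank case is a strategy rather than a proof; on this point you and the paper are in the same position, since the paper does not prove it either.
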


Interestingly enough, for long screenings, we do expect "genericness"  to hold:
\begin{conjecture} \label{generic} For $p \geq 2$ as above
$$\mathcal{W}_k(\frak{g})=\bigcap_{i=1}^l {\rm Ker}_{M(1)} e^{\sqrt{p} \alpha_i }_0$$
\end{conjecture}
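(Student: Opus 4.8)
The plan is to prove the two inclusions separately, treating the containment $\mathcal{W}_k(\frak{g}) \subseteq K$, where $K := \bigcap_{i=1}^l {\rm Ker}_{M(1)}\, e^{\sqrt{p}\alpha_i}_0$, as the accessible direction and reducing the reverse inclusion to a graded-dimension estimate. For the containment I would exploit the fact that the Feigin--Frenkel free-field (Miura) realization produces the generators $W^1,\dots,W^l$ of $\mathcal{W}_k(\frak{g})$ as explicit vectors in $M(1)$ whose coefficients are regular functions of $\nu=k+h^\vee$ (polynomial in $\sqrt{\nu}$ after clearing denominators). For generic $\nu$ the duality $\mathcal{W}_k(\frak{g})=\bigcap_i {\rm Ker}_{M(1)}\, e^{\sqrt{\nu}\alpha_i}_0$ gives $e^{\sqrt{\nu}\alpha_i}_0 W^j(\nu)=0$ as an identity of $M(1)$-valued regular functions of $\nu$; specializing to $\nu=p$ inside the fixed Fock space yields $e^{\sqrt{p}\alpha_i}_0 W^j|_{\nu=p}=0$. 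Since $K$ is a vertex subalgebra and the specialized generators still freely generate a copy of $\mathcal{W}_k(\frak{g})$ --- the Miura map staying injective because $\nu=p$ is away from the critical value $\nu=0$ --- we obtain $\mathcal{W}_k(\frak{g})\subseteq K$.

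The reverse inclusion carries the content, and I would attack it through characters. Because $\mathcal{W}_k(\frak{g})$ is freely generated by primaries of conformal weights $d_1+1,\dots,d_l+1$ (the $d_i$ the exponents of $\frak{g}$), its graded dimension is $\nu$-independent, namely $\prod_{i=1}^l\prod_{n\ge d_i+1}(1-q^n)^{-1}$. Combined with the containment, it suffices to establish the matching upper bound $\dim K(n)\le \dim \mathcal{W}_k(\frak{g})(n)$ for every $n$, equivalently a lower bound in each degree on the rank of the screening map $M(1)\to\bigoplus_i F_{\sqrt{p}\alpha_i}$, where $F_\lambda$ denotes the Fock module of charge $\lambda$. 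The natural tool is a Felder-type resolution: assemble the modules $F_{m\sqrt{p}\alpha_i}$, $m\in\mathbb{Z}$, into complexes with differentials built from $e^{\pm\sqrt{p}\alpha_i}_0$ and show that the degree-zero cohomology of the total complex computes $\mathcal{W}_k(\frak{g})$. A conceptually clean route is to invoke the Langlands self-duality of simply-laced $\mathcal{W}$-algebras, $\mathcal{W}_k(\frak{g})\cong\mathcal{W}_{k'}(\frak{g})$ with $k'+h^\vee=1/p$, under which (up to the Chevalley involution) the long screening at $p$ corresponds to the short screening at $1/p$; the claim then becomes the assertion that $\nu=1/p$ is ``generic enough'' for the short-screening realization, i.e. that the Feigin--Frenkel resolution stays exact in the relevant cohomological degrees at this value.

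The hard part, and the reason the statement is stated only as a conjecture for general ADE, is precisely this exactness at the non-generic integer point. The subtlety is that the same central charge $c={\rm rank}(L)+12(\rho,\rho)(2-p-\frac{1}{p})$ makes the short screening degenerate (Theorem~\ref{non-generic}: its kernel on $M(1)$ acquires extra vectors, producing the singlet-type overalgebra) while the long screening is conjecturally non-degenerate. These two behaviours are controlled by the positions of singular vectors in the Fock modules $F_{m\sqrt{p}\alpha_i}$ versus $F_{-m\alpha_i/\sqrt{p}}$, which sit at different levels and interact differently with the respective differentials. To finish one must show that the extra kernel vectors responsible for the strict inclusion in Theorem~\ref{non-generic} are not annihilated by the long screening, i.e. that the higher Fock-module cohomology for $e^{\sqrt{p}\alpha_i}_0$ vanishes in the range that would otherwise enlarge $K$. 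For $\frak{g}=\frak{sl}_2$ this can be done explicitly from the known embedding structure (Jantzen filtration) of the Virasoro Fock modules at $c_{1,p}$, and I would settle that case first; propagating the singular-vector bookkeeping uniformly across all ADE types, where the multivariable Felder complex is far less understood, is the genuine obstacle.
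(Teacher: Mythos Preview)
The statement you are addressing is a \emph{conjecture} in the paper, not a theorem; the paper offers no proof at all, only the remark that the rank-one case is known (where the kernel of the long screening equals the Virasoro vertex algebra $L(c_{p,1},0)$). So there is nothing to compare your argument against except that single sentence.

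As a strategy, your outline is reasonable and you correctly locate the difficulty. The forward inclusion $\mathcal{W}_k(\frak{g})\subseteq K$ via specialization of the Miura generators is plausible, though you should be a bit more careful: the generic identity $e^{\sqrt{\nu}\alpha_i}_0 W^j(\nu)=0$ is an identity in a $\nu$-dependent target module $M(1,\sqrt{\nu}\alpha_i)$, not in a fixed vector space, so ``specializing $\nu=p$'' requires either trivializing this family of Fock modules over a punctured formal disk or rewriting the vanishing as a polynomial identity among coefficients. This is fixable, but it is not quite as automatic as you suggest.

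For the reverse inclusion you do not actually give a proof: you reduce it to exactness of a multivariable Felder complex at $\nu=p$ (equivalently, via duality, at $\nu=1/p$ for the short screenings), note that this is exactly what fails for the short screenings at integer $\nu$ (Theorem~\ref{non-generic}), and then state that carrying out the singular-vector bookkeeping beyond $\frak{sl}_2$ ``is the genuine obstacle.'' That is an honest assessment, but it means your proposal is a roadmap rather than a proof. The paper is in the same position: it records the $\frak{sl}_2$ case as known and leaves the higher-rank statement open. In short, you have not closed the gap the authors themselves flag, and your write-up should be labeled as heuristic evidence or a proof sketch for the rank-one case, not as a proof of the conjecture.
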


This conjecture is known to be true in the rank one case, where  the kernel of the  long screening is precisely the Virasoro vertex algebra $L(c_{p,1},0)$ of central charge $1-\frac{6(p-1)^2}{p}$. But for the short screening we
obtain the so-called singlet algebra $\overline{M(1)}$ of type $\mathcal{W}(2,2p-1)$, an extension of $L(c_{p,1},0)$  \cite{AM1}, \cite{Ad} (the special case $p=2$ has been extensively studied in \cite{Abe}, \cite{CR}, \cite{W1}, etc.). Both vertex algebras are neither rational nor $C_2$-cofinite.

Let $$h_{r,s} = \frac{ (s p - r) ^2 - (p-1) ^2 }{4 p }. $$

\begin{theorem}\cite{Ad}
Zhu's associative algebra $A(\overline{M(1)})$ is isomorphic to the
commutative algebra   ${\C}[ x, y] / \langle  P(x,y) \rangle$,
where  $\langle  P(x,y) \rangle $ is the principal ideal generated by
\bea \label{ass-poly} P(x,y) = y ^{2} - C_p ( x - h_{p,1}) \prod_{i= 1}
^{p-1} (x- h_{i,1})  ^{2}  \nonumber \qquad ( C_p \ne 0).\eea
\end{theorem}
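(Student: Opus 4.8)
The plan is to exhibit $A(\overline{M(1)})$ as an explicit quotient of a two-variable polynomial ring and then to pin down the single defining relation. Let $H$ be the primary generator of $\overline{M(1)}$ of conformal weight $2p-1$, and put $x=[\omega]$ and $y=[H]$ in $A(\overline{M(1)})$. Since $\overline{M(1)}$ is of type $\mathcal{W}(2,2p-1)$ it is strongly generated by $\omega$ and $H$, so $A(\overline{M(1)})$ is generated as an associative algebra by $x$ and $y$; and as $x=[\omega]$ is central in any Zhu algebra, $A(\overline{M(1)})$ is commutative, so $A(\overline{M(1)})=\mathbb{C}[x,y]/I$ for an ideal $I$. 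Writing $P(x,y)=y^{2}-C_p(x-h_{p,1})\prod_{i=1}^{p-1}(x-h_{i,1})^{2}$, the theorem is the statement that $I=\langle P\rangle$.

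First I would prove $P\in I$, equivalently $[H]*[H]=C_p([\omega]-h_{p,1})\prod_{i=1}^{p-1}([\omega]-h_{i,1})^{2}$. Recall that $\overline{M(1)}$ decomposes over its Virasoro subalgebra $L(c_{p,1},0)$ as $\bigoplus_{n\ge 0}L(c_{p,1},h_{1,2n+1})$, with $H$ the singular vector of the $n=1$ summand (indeed $h_{1,3}=2p-1$); since $h_{1,5}=6p-2>4p-2$, every vector of $\overline{M(1)}$ of weight $\le 4p-2$ --- in particular every $H_{(j-1)}H$ --- is a sum of a Virasoro descendant of ${\bf 1}$ and a Virasoro descendant of $H$, and in $A(\overline{M(1)})$ the former reduce to polynomials in $[\omega]$ while the latter reduce to elements of $\mathbb{C}[[\omega]]*[H]$. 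Hence $[H]*[H]=R_0([\omega])+R_1([\omega])*[H]$ for some $R_0,R_1\in\mathbb{C}[x]$. To compute $R_0,R_1$ I would evaluate on the Feigin--Fuchs modules: for $\lambda\in\mathbb{C}$ the Fock module $M(1,\lambda)$ restricts to a $\mathbb{Z}_{\ge 0}$-graded $\overline{M(1)}$-module with one-dimensional top, on which $o(\omega)$ acts by the conformal weight $h_\lambda$ (a quadratic polynomial in $\lambda$ with minimum value $h_{p,1}$, attained at a single point $\lambda_0$) and $o(H)$ acts by a scalar $f(\lambda)$. Using an explicit formula for the weight-$(2p-1)$ singular vector $H$ in the Fock space $M(1)$ --- obtainable, say, via iterated screening-current integrals or a rectangular Jack symmetric function --- one computes that $f$ is a polynomial of degree exactly $2p-1$ with nonzero leading coefficient, vanishing simply precisely at $\lambda_0$ and at the $p-1$ conjugate pairs of momenta at which $h_\lambda\in\{h_{1,1},\dots,h_{p-1,1}\}$. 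Since $h_\lambda-h_{p,1}$ has a double zero at $\lambda_0$ while $h_\lambda-h_{i,1}$ has simple zeros on the $i$-th pair, a comparison of divisors gives $f(\lambda)^{2}=C_p(h_\lambda-h_{p,1})\prod_{i=1}^{p-1}(h_\lambda-h_{i,1})^{2}$ with $C_p\ne 0$ (the constant read off from the leading term). Evaluating the relation above on the same modules gives $f(\lambda)^{2}=R_0(h_\lambda)+R_1(h_\lambda)f(\lambda)$, so $R_1(h_\lambda)f(\lambda)\in\mathbb{C}[h_\lambda]$; as $f$ has odd degree it does not lie in $\mathbb{C}[h_\lambda]$, and since $\mathbb{C}[\lambda]$ is free of rank $2$ over $\mathbb{C}[h_\lambda]$ this forces $R_1=0$. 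Then $R_0(h_\lambda)=C_p(h_\lambda-h_{p,1})\prod_i(h_\lambda-h_{i,1})^{2}$, and since $h_\lambda$ takes infinitely many values, $R_0(x)=C_p(x-h_{p,1})\prod_i(x-h_{i,1})^{2}$. Thus $P\in I$, and we get a surjection $\mathbb{C}[x,y]/\langle P\rangle\twoheadrightarrow A(\overline{M(1)})$.

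It then remains to show this surjection is injective. Using the relation $y^{2}\equiv C_p(x-h_{p,1})\prod_i(x-h_{i,1})^{2}$, every class modulo $\langle P\rangle$ is represented by some $A(x)+B(x)y$ with $A,B\in\mathbb{C}[x]$. If such a class maps to $0$ in $A(\overline{M(1)})$, then evaluating on the top of $M(1,\lambda)$ gives $A(h_\lambda)+B(h_\lambda)f(\lambda)=0$ for all $\lambda$, hence $A(h_\lambda)^{2}=B(h_\lambda)^{2}f(\lambda)^{2}=C_p\,B(h_\lambda)^{2}(h_\lambda-h_{p,1})\prod_i(h_\lambda-h_{i,1})^{2}$, and, $h_\lambda$ ranging over an infinite set, $A(x)^{2}=C_p\,B(x)^{2}(x-h_{p,1})\prod_i(x-h_{i,1})^{2}$ in $\mathbb{C}[x]$. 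If $B\ne 0$ this would exhibit $C_p(x-h_{p,1})$ as a square in $\mathbb{C}(x)$, impossible since $C_p\ne 0$ and $x-h_{p,1}$ is a nonconstant squarefree polynomial coprime to each $x-h_{i,1}$; hence $B=0$, and then $A=0$. So the kernel is trivial and $A(\overline{M(1)})\cong\mathbb{C}[x,y]/\langle P(x,y)\rangle$.

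The main obstacle is the computation in the middle step: obtaining a workable closed form for the singular vector $H$ in $M(1)$ and evaluating the scalar $f(\lambda)$ of $o(H)$ on the Feigin--Fuchs modules precisely enough to read off its degree, its full zero divisor, and the nonvanishing of $C_p$; once this is in hand the rest is formal. (Alternatively, one can avoid the Virasoro decomposition and the vanishing of $R_1$ by instead observing that $A(\overline{M(1)})$ surjects onto the subalgebra $\mathbb{C}[h_\lambda,f(\lambda)]$ of $\mathbb{C}[\lambda]$, whose graded Hilbert series $(1+t^{2p-1})/(1-t^{2})$ equals that of $\mathbb{C}[x,y]/\langle P\rangle$, forcing all three algebras to coincide.)
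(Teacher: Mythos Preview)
The paper does not itself prove this theorem; it is quoted from \cite{Ad}.  Your outline is essentially the argument carried out there: one shows that $A(\overline{M(1)})$ is commutative and generated by $[\omega]$ and $[H]$, identifies the single relation $[H]*[H]=R_0([\omega])$ by evaluating zero modes on the one-dimensional tops of the Fock modules $M(1,\lambda)$, and then uses the same family of modules to exclude any further relations.  In \cite{Ad} the scalar $f(\lambda)=o(H)\big|_{M(1,\lambda)(0)}$ is computed via an explicit Schur-polynomial expression for $H$ in the Heisenberg modes, which makes its degree $2p-1$, its simple zero at the vertex $\lambda_0$, its simple zeros at the $p-1$ conjugate pairs, and the nonvanishing of $C_p$ all transparent; this is exactly the ``obstacle'' you flag.

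Two small points worth tightening.  First, the reduction ``Virasoro descendants of $H$ map to $\mathbb{C}[[\omega]]*[H]$ in $A(\overline{M(1)})$'' is correct but not completely immediate; one line suffices: induct on the length of the descendant using $[L_{-1}v]=-\deg(v)[v]$, $[L_{-2}v]=[\omega]*[v]+\deg(v)[v]$, and, for $m\ge 3$, the recursion $[L_{-m}v]=-2[L_{-m+1}v]-[L_{-m+2}v]$ obtained from $\mathrm{Res}_x\,Y(\omega,x)\frac{(1+x)^2}{x^{m-1}}v\in O(\overline{M(1)})$.  Second, your injectivity step tacitly uses that $h_{i,1}\ne h_{p,1}$ for $1\le i\le p-1$ (so that $x-h_{p,1}$ really occurs with odd multiplicity on the right-hand side); this is immediate from the formula for $h_{r,s}$ but should be said.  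With these in place, your divisor-comparison and the ``$C_p(x-h_{p,1})$ is not a square in $\mathbb{C}(x)$'' argument go through exactly as written, and the alternative Hilbert-series shortcut you mention at the end would also work.
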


Now, by using results in Section 2,  we see that irreducible $\overline{M(1)}$--modules are parameterized by zeros of a certain  rational curve in $\C ^2$. We expect that irreducible modules for vertex operator algebras from Theorem \ref{non-generic} also have interesting interpretation in the context of algebraic curves.

\subsection{Further extended affine $\mathcal{W}$-algebras}

Instead of focusing on the charge zero subspace $M(1)$ (the Fock space), nothing prevented us
from considering intersections of the kernels of screenings on the whole lattice vertex algebra $V_L$. Let us first examine the long screenings in this situation. Conjecturally,  we expect  to produce a certain vertex algebra  denoted by  $\mathcal{W}^{\diamond}(p)_Q:=\displaystyle{\bigcap_{i=1}^l {\rm Ker}_{V_L} e^{\sqrt{p} \alpha_i }_0}$, with a large  ideal $I$ such that $W^{\diamond}(p)_Q/I \cong \mathcal{W}_k(\frak{g})$ (the structure of $\mathcal{W}^{\diamond}(p)$ was analyzed in \cite{MP2} in connection to Feigin-Stoyanovsky's principal subspaces \cite{FSE}).

\begin{example} For $Q=A_1$. we have
$$W^{\diamond}(p)_Q=\langle e^{ \sqrt{p} \alpha_1}, \omega \rangle,$$
the smallest conformal vertex subalgebra of $V_L$ containing the vector $e^{\alpha_1}$.
Here $\langle e^{ \sqrt{p} \alpha_1} \rangle$ is the well-known FS principal subspace \cite{FSE} (see also \cite{MP2}).
 \end{example}

\subsection{Maximally extended $\mathcal{W}$-algebras: a conjecture}
\label{extended}
Due to differences already observed in Theorem \ref{non-generic} and Conjecture \ref{generic}, it is not surprising that the conformal vertex algebra
\begin{equation} \label{short}
\mathcal{W}(p)_Q:=\bigcap_{i=1}^l {\rm Ker}_{V_L} e^{-\alpha_j/\sqrt{p}}_0
\end{equation}
will exhibit  properties different to those observed for $\mathcal{W}^{\diamond}(p)_Q$.

We believe the following rather strong conjecture motivated by \cite{FT} holds.
\begin{conjecture}  We have
\begin{itemize}
 \item[(1)] The vertex algebra $\mathcal{W}(p)_Q$ is irrational and $C_2$-cofinite,
 \item[(2)] It is strongly generated by the generators of $\mathcal{W}_k(\frak{g})$ and
finitely many primary vectors,
 \item[(3)] $Soc_{\mathcal{W}_k(\frak{g})} (V_L)=\mathcal{W}(p)_Q$
 \item[(4)] $\mathcal{W}(p)_Q$ admits logarithmic modules of $L(0)$-nilpotent rank at most ${\rm rank}(L)+1$ (for the explanation see Section \ref{log-section}).
 \item[(5)] $\dim A({\triplet}_Q) =\dim \mathcal{P} ({\triplet}_ Q)$.
\end{itemize}
\end{conjecture}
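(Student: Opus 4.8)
I focus on part~(5), the equality $\dim A(\triplet_Q)=\dim\mathcal{P}(\triplet_Q)$, since among the five assertions it is the most self-contained; I would nonetheless carry it out in tandem with parts (1)--(4), which are not cleanly separable from it. Granting part~(1), so that both sides are finite, the inequality $\dim A(\triplet_Q)\le\dim\mathcal{P}(\triplet_Q)$ is automatic: as recalled in Section~2, there is a canonical increasing filtration on $A(V)$ together with a surjection of Poisson algebras $\mathcal{P}(V)\twoheadrightarrow{\rm gr}\,A(V)$. Thus the content of (5) is the reverse inequality, equivalently the \emph{injectivity} of this surjection for $V=\triplet_Q$ --- i.e.\ that the defining relations of $\mathcal{P}(\triplet_Q)$ receive no lower-order corrections in $A(\triplet_Q)$ that would collapse the dimension.

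\emph{Stage 1: compute $\mathcal{P}(\triplet_Q)=\triplet_Q/C_2(\triplet_Q)$.} Using part~(2), fix strong generators --- the $l$ generators of $\mathcal{W}_k(\frak{g})$ plus the finitely many extra primaries --- so that $\mathcal{P}(\triplet_Q)$ is generated as a commutative Poisson algebra by their images, and the problem reduces to determining the ideal of relations modulo $C_2$. These relations come from null vectors of $\triplet_Q$, which I would extract from the lattice realization: write each generator explicitly as a polynomial in Heisenberg modes times a lattice exponential inside $V_L$, and use $\triplet_Q=\bigcap_j{\rm Ker}_{V_L}\,e^{-\alpha_j/\sqrt{p}}_0$ to decide which combinations lie in $C_2(\triplet_Q)$. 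The resulting finiteness count is precisely part~(1); the aim is a monomial basis of $\mathcal{P}(\triplet_Q)$ of some combinatorially described size $N=N(p,Q)$ generalizing the known $Q=A_1$ value, cross-checked against the associated scheme ${\rm Spec}\,\mathcal{P}(\triplet_Q)$, which should be a finite scheme reflecting the coset $\triplet_Q\supset\mathcal{W}_k(\frak{g})$ and the $L$-grading.

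\emph{Stage 2: show $A(\triplet_Q)$ is no smaller.} By the filtration argument the images of the same $N$ monomials already span $A(\triplet_Q)$, so it suffices to prove they are linearly independent there. For a rational vertex algebra one would separate points of $A(V)$ using its irreducible modules, but $\triplet_Q$ is irrational and this genuinely fails --- that non-semisimplicity is the whole phenomenon. Here part~(4) enters: the logarithmic $\triplet_Q$-modules of $L(0)$-nilpotent rank up to ${\rm rank}(L)+1$ furnish further functionals on $A(\triplet_Q)$, namely $[v]\mapsto$ matrix entries of $o(v)$ on the top of the module, which detect exactly the nilpotent directions invisible to the irreducibles. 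I would build these modules by the relative-screening/extension procedure of Section~\ref{log-section} (or by a Feigin--Tipunin-type construction on $V_L$-modules), read off the Jordan blocks of $o(\omega)=L(0)$ and of the $o$'s of the higher generators, and check that the resulting pairing is non-degenerate on the span of the $N$ monomials. Then $N=\dim\mathcal{P}(\triplet_Q)\le\dim A(\triplet_Q)\le N$, giving (5).

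\emph{Main obstacle.} The crux is Stage~1, the explicit determination of the null-vector ideal of $\triplet_Q$ for general $Q$: even for $Q=A_1$ this is a substantial computation inside the triplet, and in higher rank it is entangled with proving $C_2$-cofiniteness (part~(1)) and with constructing the full family of logarithmic modules (part~(4)), so in practice (1), (2), (4), (5) must be handled together. A second difficulty is that, lacking rigidity of the module category \cite{Miy2}, there is no soft categorical guarantee that the logarithmic modules one writes down actually exhaust the nilpotent structure of $A(\triplet_Q)$, so non-degeneracy of the pairing must be verified by hand. A promising shortcut to try first is an a~priori argument that ${\rm Spec}\,\mathcal{P}(\triplet_Q)$ is reduced --- or at least that ${\rm gr}\,A(\triplet_Q)$ carries no extra nilpotents --- which would force the canonical surjection $\mathcal{P}(\triplet_Q)\twoheadrightarrow{\rm gr}\,A(\triplet_Q)$ to be an isomorphism outright; in the $Q=A_1$ case the plane curve defined by $P(x,y)$ in the theorem of \cite{Ad} quoted above is reduced, which makes this line plausible.
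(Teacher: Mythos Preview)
The statement you are addressing is a \emph{conjecture} in the paper, not a theorem; there is no proof to compare against. The authors only remark that (5) is known for $Q=A_1$ (proved in \cite{AM7}), and that the irrationality half of (1) follows from (4). So what can be compared is your strategy against the method the paper describes for the rank-one case.

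On that comparison, your Stage~2 is essentially the reverse of what the paper does. For $Q=A_1$ the authors explicitly emphasize that they determine $A(\triplet)$ \emph{without} any input from logarithmic modules: the key tool is a homomorphism $\Phi:A(\overline{M(1)})\to A(\triplet)$ from the Zhu algebra of the singlet subalgebra, whose kernel they identify; the existence of logarithmic modules is then a \emph{corollary} of the structure of $A(\triplet)$, not an ingredient in computing it. Your proposal to first construct enough logarithmic modules and then use their top components to separate elements of $A(\triplet_Q)$ is a legitimate alternative in principle, but it is not how the only settled case was handled, and the paper highlights precisely the opposite logical flow as the ``advantage'' of their method.

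Your proposed shortcut at the end contains a concrete error. The algebra $\mathcal{P}(\triplet)$ is \emph{not} reduced even for $Q=A_1$: the relations displayed just before the $A(\triplet)$ theorem include $\bar{E}^2=\bar{F}^2=\bar{H}\bar{E}=\bar{H}\bar{F}=0$ and $\bar{\omega}^p\bar{H}=0$, so $\operatorname{Spec}\mathcal{P}(\triplet)$ carries substantial nilpotent structure and an ``a priori reducedness'' argument cannot work. The plane curve $P(x,y)$ you cite from \cite{Ad} pertains to the \emph{singlet} algebra $\overline{M(1)}$, not to the triplet $\triplet$; conflating the two is what makes the shortcut look plausible. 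Likewise, since both $\mathcal{P}(\triplet)$ and $A(\triplet)$ genuinely have nilpotents (the ideals $\mathbb{I}_{h_{i,1}}$ in $A(\triplet)$ are non-semisimple), a reducedness argument for $\operatorname{gr}A(\triplet_Q)$ is also hopeless.
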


Let us briefly comment on (5) first.  For $V$ a $C_2$--cofinite vertex operator algebra. M. Gaberdiel and T. Gannon in \cite{GG} initiated a relationship between $A(V)$ and $\mathcal{P}(V)$. They raised an interesting question: When does $\dim A(V) = \dim \mathcal{P}(V) $?  For a large family of rational vertex operator algebras of affine type, the equality of dimensions holds (cf. \cite{FL}, \cite{FFL}).  In \cite{AM7}, we studied  this question for $C_2$--cofinite, irrational vertex operator superalgebras, and proved that (5) holds in the rank
one case.

The first half of part (1) of the conjecture is known to be true in general, and this
follows also from (4). We have already shown in \cite{AM1}  the conjecture to
be true for $Q=A_1$.  In this case we write  $\triplet= \triplet_{Q}$ for brevity.

\subsection{Triplet vertex algebra $\triplet$ }

The next result was proven in \cite{AM1} and \cite{AM7}.

 \begin{theorem} The following holds:
 \begin{itemize}
  \item[(1)] $\mathcal{W}(p)$ is  $C_2$--cofinite and irrational.
  \item[(2)] $\triplet$ is strongly generated by $\omega$ and three primary vectors
  $E$, $F$ and $H$ of conformal weight $2 p-1$.
  \item[(3)] $\triplet$ has exactly $2p$ irreducible modules, usually denoted by
   $$ \Lambda(1), \dots, \Lambda(p); \Pi(1), \dots, \Pi(p). $$
  \item[(4)]
$$\dim A(\mathcal{W}(p)) =  \dim \mathcal{P} (\triplet) =  6p-1.$$
\end{itemize}
\end{theorem}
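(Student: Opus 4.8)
\emph{Realization, generators, and the Virasoro structure.} The plan is to work inside the rank-one lattice vertex algebra $V_L$ with $L=\sqrt{p}\,\Z\alpha$, $(\alpha,\alpha)=2$, and the modified conformal vector introduced earlier. A weight computation shows that $e^{\sqrt{p}\alpha}$ has conformal weight $1$ (so the long screening $e^{\sqrt{p}\alpha}_0$ preserves degree) and that $F:=e^{-\sqrt{p}\alpha}$ has conformal weight $2p-1$, while the operator product expansion gives $e^{-\alpha/\sqrt{p}}_0F=0$, hence $F\in\triplet$. Setting $H:=e^{\sqrt{p}\alpha}_0F$ and $E:=(e^{\sqrt{p}\alpha}_0)^2F$, and using that the long and short screenings commute, we get $E,H\in\triplet$; all three have conformal weight $2p-1$, and a charge/weight count shows $(e^{\sqrt{p}\alpha}_0)^3F=0$, so $\{E,H,F\}$ spans a three-dimensional module for the $\mathfrak{sl}_2$ generated by the long screening. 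To control all of $\triplet$ I would decompose $V_L=\bigoplus_{n\in\Z}M(1,n\sqrt{p}\alpha)$ into Fock modules and compute $\ker_{V_L}e^{-\alpha/\sqrt{p}}_0$ using Felder's complex (the known singular-vector structure of Virasoro Fock modules) together with this $\mathfrak{sl}_2$-action. This should yield $\triplet\cong\bigoplus_{n\ge0}(2n+1)\,L(c_{p,1},h_{1,2n+1})$ as a module over the subalgebra $L(c_{p,1},0)$ generated by $\omega$ (so that $E,F,H$ are exactly the multiplicity-three part at $h_{1,3}=2p-1$), with each isotypic component generated over the Virasoro algebra by vectors that are normally ordered polynomials in the modes of $E,F,H$ applied to $\mathbf{1}$. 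This is $(2)$.

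\emph{$C_2$-cofiniteness and the Poisson dimension.} Using the previous step I would pin down the relations in $\mathcal{P}(\triplet)=\triplet/C_2(\triplet)$: the image of the nontrivial singular vector of $L(c_{p,1},0)$ gives a polynomial relation in $\overline{\omega}$, after which $\overline{\omega}$ is nilpotent of controlled order, while the leading terms of the products $E_nF$, $E_nE$, $H_nH$, $\omega_nE$, $\ldots$ yield relations expressing $\overline{E}\cdot\overline{F}$, $\overline{E}\cdot\overline{E}$, $\overline{H}\cdot\overline{H}$, $\overline{\omega}\cdot\overline{E}$, $\ldots$ through powers of $\overline{\omega}$ of bounded degree. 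A Bergman diamond-lemma/counting argument then shows $\mathcal{P}(\triplet)$ is spanned by $\overline{\omega}^{j}$ and $\overline{\omega}^{j}\overline{E},\ \overline{\omega}^{j}\overline{F},\ \overline{\omega}^{j}\overline{H}$ over explicit ranges of $j$, with total count $6p-1$, and linear independence is verified against the free-field realization. In particular $\dim\mathcal{P}(\triplet)<\infty$, which is the $C_2$-cofiniteness in $(1)$; irrationality follows from $(3)$ below, or directly from the existence of a nonsemisimple $\N$-graded module.

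\emph{Zhu's algebra, the irreducibles, and the equality.} Since $\mathcal{P}(V)$ surjects onto $\mathrm{gr}\,A(V)$, the previous step already gives $\dim A(\triplet)\le 6p-1$. I would then realize the $2p$ modules $\Lambda(1),\dots,\Lambda(p),\Pi(1),\dots,\Pi(p)$ using the $2p$ irreducible $V_L$-modules $V_{L+\gamma}$, $\gamma\in L^{\circ}/L$, and the short screening, prove each is an irreducible $\triplet$-module with one-dimensional top component, and check that the $2p$ top components are pairwise non-isomorphic over $A(\triplet)$. Next, compute $A(\triplet)$ directly: it is generated by $[\omega],[E],[F],[H]$, the $\mathcal{P}$-relations lift (with lower-order corrections) to relations in $A(\triplet)$, and evaluating the $6p-1$ monomials above on the direct sum of the $2p$ irreducibles together with the logarithmic $\triplet$-modules (known in this rank-one case) shows they stay linearly independent in $A(\triplet)$. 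Hence $\dim A(\triplet)=\dim\mathcal{P}(\triplet)=6p-1$, the surjection $\mathcal{P}(\triplet)\to\mathrm{gr}\,A(\triplet)$ is an isomorphism, and by Zhu's correspondence $\Lambda(i),\Pi(i)$ exhaust the irreducible $\N$-graded $\triplet$-modules, which is $(3)$.

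\emph{The main obstacle.} The crux is the structural input of the first step: one needs the \emph{exact} Virasoro-module structure of $\ker_{V_L}e^{-\alpha/\sqrt{p}}_0$ — equivalently, the complete list of null vectors of $\triplet$ — since strong generation, the sharp bound $\dim\mathcal{P}(\triplet)\le 6p-1$, and the presentation of $A(\triplet)$ all rest on it, and for general $p$ this demands a careful use of Felder's complex rather than an ad hoc null-vector search. A secondary but genuine difficulty is the Gaberdiel--Gannon-type point in $(4)$: turning the a priori inequality $\dim A(\triplet)\le\dim\mathcal{P}(\triplet)$ into an equality is exactly what forces one to construct and compute with the nonsemisimple (logarithmic) $\triplet$-modules, and not just the $2p$ irreducibles.
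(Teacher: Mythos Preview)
Your outline for parts (1)--(3) is essentially the route taken in \cite{AM1}: Felder's resolution controls the Virasoro structure of $\ker_{V_L}e^{-\alpha/\sqrt p}_0$, this yields the strong generation by $\omega,E,F,H$, and the relations you list are exactly the ones recorded in the paper for $\mathcal{P}(\triplet)$, giving $C_2$-cofiniteness. No objection there.

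For part (4), however, your approach genuinely diverges from the paper's. You propose to establish $\dim A(\triplet)\ge 6p-1$ by evaluating the candidate basis on the $2p$ irreducibles \emph{together with logarithmic modules}. The paper (following \cite{AM7}) does the opposite: it builds a homomorphism $\Phi:A(\overline{M(1)})\to A(\triplet)$ from the singlet Zhu algebra, whose structure is already known (it is $\C[x,y]/\langle P(x,y)\rangle$), determines $\ker\Phi$, and reads off $A(\triplet)$ directly. The paper emphasizes that this method needs \emph{no} input from logarithmic modules; rather, the existence of the logarithmic $\mathcal{P}_i^+$ is a \emph{consequence} of the non-semisimple $2$-dimensional ideals $\mathbb{I}_{h_{i,1}}$ that appear in $A(\triplet)$. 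Your route is workable provided you construct the logarithmic modules independently (e.g.\ via the $\Delta$-deformation of \cite{AM5}), but you should be explicit that you are not assuming what you are trying to prove.

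There is also a factual slip that matters for the count. You assert that each of the $2p$ irreducibles has a one-dimensional top component; this is false. Half of them---the $\Pi$-type modules corresponding to the $M_2(\C)$ summands $\mathbb{M}_{h_{i,1}}$, $2p\le i\le 3p-1$, in the paper's description of $A(\triplet)$---have two-dimensional tops. This is precisely how the irreducibles alone already account for $4p+p=5p$ dimensions of $A(\triplet)$; the remaining $p-1$ dimensions come from the nilpotent part of the ideals $\mathbb{I}_{h_{i,1}}$ and are what the logarithmic modules (or the singlet-algebra argument) detect. With all tops one-dimensional your lower bound from irreducibles would be only $2p$, and no amount of logarithmic input would close the gap to $6p-1$.
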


\noindent Let us here recall description of  $C_2$--algebra  $\mathcal{P} (\triplet)$.
Generators of $\mathcal{P}(\mathcal{W}(p))$ are given by
$$\bar{\omega}, \bar{H}, \bar{E}, \bar{F},$$
and the relations are
\bea \label{c2-1}
&&   \bar{E} ^2 = \bar{F} ^2 = \bar{H} \bar{F}= \bar{H} \bar{E}=  0, \nonumber \\
&& \bar{H} ^2 =- \bar{E} \bar{F} = {\nu}  \bar{\omega} ^{ 2p -1} \quad (\nu  \ne 0), \nonumber \\
&&  \bar{\omega} ^ p \bar{H}=\bar{\omega} ^ p \bar{E}= \bar{\omega} ^p\bar{F}=0. \nonumber
\eea

The complete description of the structure of Zhu's algebra was obtained in  \cite{AM7}, where we developed a new method for the determination of Zhu's algebra which was based on a construction of homomorphism of $\Phi : A(\overline{M(1)}) \rightarrow A(\triplet)$. Then we described the kernel of such homomorphism, and by using knowledge of Zhu's algebra $A(\overline{M(1)})$ for the singlet vertex algebra $\overline{M(1)}$ mentioned earlier, we get the following result:
\begin{theorem}\cite{AM1}, \cite{AM7}
Zhu's algebra $A(\mathcal{W}(p))$ decomposes as a direct sum:
$$A(\mathcal{W}(p)) = \bigoplus_{i=2p } ^{3 p-1}   \mathbb{M}_{h_{i,1}}  \oplus \bigoplus _{i=1} ^{p-1}\mathbb{I}_{h_{i,1}} \oplus {\C}_{h_{p,1}},$$
where
$\mathbb{M}_{h_{i,1}}$ ideal isomorphic to matrix algebra $M_2 (\C)$,
 $\mathbb{I}_{h_{i,1}}$ is $2$--dimensional ideal,
${\C}_{h_{p,1}}$ is $1$-dimensional ideal. The structure of Zhu's algebra $A(\triplet)$ implies the existence of logarithmic modules.
\end{theorem}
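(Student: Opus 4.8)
\medskip
\noindent\textbf{Proof strategy.}
The plan is to compute $A(\mathcal{W}(p))$ relatively to the already known Zhu algebra of the singlet. The inclusion of vertex algebras $\overline{M(1)}\hookrightarrow\mathcal{W}(p)$, both carrying the same conformal vector $\omega$ (here $\overline{M(1)}$ is the Heisenberg-neutral, equivalently $SL(2,\mathbb{C})$-invariant, subalgebra of $\mathcal{W}(p)$), satisfies $O(\overline{M(1)})\subseteq O(\mathcal{W}(p))$, since $*$ and $\circ$ are computed by the same formulas inside a subalgebra. Hence it descends to an associative algebra homomorphism
\[
\Phi\colon A(\overline{M(1)})\longrightarrow A(\mathcal{W}(p)),\qquad \Phi([\omega])=[\omega],
\]
whose image is a \emph{commutative} subalgebra of $A(\mathcal{W}(p))$. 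Since $\mathcal{W}(p)$ is strongly generated by $\omega,E,F,H$ (part $(2)$), the standard fact that $A(V)$ is generated as an algebra by the classes of a strong generating set shows that $\Phi(A(\overline{M(1)}))$ together with $[E],[F],[H]$ generates all of $A(\mathcal{W}(p))$; moreover $[\omega]$ is central, and the whole picture is $SL(2,\mathbb{C})$-equivariant with $E,F,H$ spanning an adjoint representation.

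Next I would determine $\ker\Phi$. By the description of $A(\overline{M(1)})$ recalled above we have $A(\overline{M(1)})\cong\mathbb{C}[x,y]/\langle P(x,y)\rangle$ with $x=[\omega]$, so its irreducible modules are the points of the affine curve $P(x,y)=0$; the ones occurring in top spaces of $\mathcal{W}(p)$-modules are exactly those pinned down by the $2p$ irreducibles $\Lambda(i),\Pi(i)$ of part $(3)$, whose lowest conformal weights lie among the $h_{i,1}$. One then shows that $\ker\Phi$ is the ideal supported, with the correct infinitesimal thickening, on this finite subscheme, so that $\Phi(A(\overline{M(1)}))\cong A(\overline{M(1)})/\ker\Phi$ is finite dimensional with structure dictated by the local geometry of $P(x,y)=0$: at each node $(h_{i,1},0)$, $1\le i\le p-1$ — the points where the factor $(x-h_{i,1})^2$ enters squared — one gets a two-dimensional local quotient $\cong\mathbb{C}[t]/(t^2)$ on which $[\omega]=x$ acts as $h_{i,1}+t$ with $t$ nilpotent; the ramification point $(h_{p,1},0)$ and the remaining smooth points contribute one-dimensional quotients. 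This already exhibits the ideals $\mathbb{I}_{h_{i,1}}$, $\mathbb{C}_{h_{p,1}}$, and the scalar (identity) directions of the matrix blocks.

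It remains to control the non-commutative part, i.e.\ the products involving $[E],[F],[H]$. Here I would use $SL(2,\mathbb{C})$-equivariance together with the explicit realization of $\mathcal{W}(p)$ and all its modules inside the rank-one lattice vertex algebra $V_{\sqrt{2p}\,\mathbb{Z}}$ and its modules (including $\mathbb{Z}_2$-twisted ones): decomposing $\mathrm{End}(\mathbb{C}^2)\cong\mathbf{1}\oplus\mathbf{3}$ matches one invariant direction (an idempotent already in $\Phi(A(\overline{M(1)}))$) against the three-dimensional span of $[E],[F],[H]$-classes in that block, and one reads off $[E]*[F]$, $[H]*[E]$, \dots\ by evaluating $o(E),o(F),o(H)$ on the $2p$ irreducibles. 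The crucial point is to check that on the $p$ irreducibles of lowest weights $h_{2p,1},\dots,h_{3p-1,1}$ — those with two-dimensional top space — the charged modes act irreducibly, i.e.\ $o(E),o(F),o(H)$ span the traceless $2\times2$ matrices, forcing the corresponding block to be the \emph{full} $M_2(\mathbb{C})=\mathbb{M}_{h_{i,1}}$ rather than a proper subalgebra, and that they act by zero on the remaining irreducibles. Since the conformal-degree filtration on $A(V)$ has associated graded a quotient of $\mathcal{P}(V)$, one has $\dim A(\mathcal{W}(p))\le\dim\mathcal{P}(\mathcal{W}(p))=6p-1$ (the latter read off from the presentation of $\mathcal{P}(\mathcal{W}(p))$ recalled above), while the summands produced so far already account for $4p+2(p-1)+1=6p-1$; hence nothing is missing and the asserted direct-sum decomposition follows, with $\mathrm{gr}\,A(\mathcal{W}(p))\cong\mathcal{P}(\mathcal{W}(p))$.

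Finally, for the logarithmic consequence: each two-dimensional summand $\mathbb{I}_{h_{i,1}}$ is a unital commutative algebra with a single irreducible module, hence $\cong\mathbb{C}[t]/(t^2)$, and by the node computation $[\omega]$ acts on the regular module $U:=\mathbb{I}_{h_{i,1}}$ as $h_{i,1}\,\mathrm{id}_U+N$ with $N\ne0$, $N^2=0$. The $\N$-graded module $L(U)$ with $L(U)(0)\cong U$ then has $L(0)=o(\omega)$ acting on its top space as $[\omega]$, hence non-semisimply, so $L(U)$ is a logarithmic $\mathcal{W}(p)$-module of $L(0)$-nilpotent rank $2$; this yields $p-1$ pairwise non-isomorphic logarithmic modules. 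I expect the real obstacle to be the middle step: the singlet homomorphism only sees the commutative body of $A(\mathcal{W}(p))$, so proving that each block of charged generators fills out a full matrix algebra — equivalently, the non-degeneracy of the relevant correlation/intertwining data — requires honest vertex-operator computations in $V_{\sqrt{2p}\,\mathbb{Z}}$ (or equivalent fusion-rule input); the curve bookkeeping, the dimension count, and the passage to logarithmic modules are then essentially formal.
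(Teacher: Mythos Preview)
Your strategy is essentially the one the paper describes: construct the homomorphism $\Phi\colon A(\overline{M(1)})\to A(\mathcal{W}(p))$ from the singlet inclusion, determine its kernel, and exploit the explicit presentation $A(\overline{M(1)})\cong\mathbb{C}[x,y]/\langle P(x,y)\rangle$ together with the $C_2$-bound $\dim A(\mathcal{W}(p))\le\dim\mathcal{P}(\mathcal{W}(p))=6p-1$; the derivation of logarithmic modules from the non-semisimple $[\omega]$-action on the ideals $\mathbb{I}_{h_{i,1}}$ is exactly the consequence the paper draws.

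One point deserves care. Your phrase ``at each node one gets a two-dimensional local quotient $\cong\mathbb{C}[t]/(t^2)$'' reads as if the nodal geometry of $P(x,y)=0$ alone forces the thickening; it does not. The local ring of the curve at a node is not $\mathbb{C}[t]/(t^2)$, and from the $2p$ irreducibles you only obtain the semisimple lower bound $\dim A(\mathcal{W}(p))\ge 5p$ (namely $p$ copies of $M_2(\mathbb{C})$ and $p$ one-dimensional pieces). So the dimension squeeze $5p\le\dim A\le 6p-1$ leaves exactly the question of whether each $\mathbb{I}_{h_{i,1}}$ is one- or two-dimensional unresolved, and this is precisely the content that yields (or fails to yield) logarithmic modules. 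The paper stresses that this is established \emph{without} first constructing logarithmic representations: one computes $\ker\Phi$ directly by producing explicit relations in $A(\mathcal{W}(p))$ among $[\omega],[H],[E],[F]$ (vertex-operator computations inside $V_L$), which pins down the image of $\Phi$ to dimension $3p-1$ and then, together with the charged generators and the upper bound, forces the full $6p-1$. Your identification of the ``real obstacle'' as the non-commutative matrix-block step is therefore only half the story; the nilpotent thickening at the $h_{i,1}$ for $1\le i\le p-1$ is equally a computation, not a formality.
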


The similar result was obtained in \cite{AM7} for what we called the super-triplet vertex algebra $\striplet$. The advantage of the method used in \cite{AM7} is that  for the description of Zhu's algebra we don't use any  result about the  existence of logarithmic representations. In our approach,  the existence of logarithmic representations is a consequence of the description of Zhu's algebra.

\begin{corollary} \cite{AM7}.
For every $1 \le i \le p-1$, there exits a logarithmic, self-dual,  $\N$--graded $\triplet$--module denoted by $\mathcal{P}_{i} ^+$ such that the top component $\mathcal{P}_{i} ^+ (0)$ is two-dimensional and
$L(0)$ acts on it (in some basis) as
$$\left(
    \begin{array}{cc}
      h_{i,1} & 1 \\
      0 & h_{i,1} \\
    \end{array}
  \right) .$$
\end{corollary}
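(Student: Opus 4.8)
The plan is to deduce the existence of the logarithmic modules $\mathcal{P}_i^+$ directly from the structure of Zhu's algebra $A(\triplet)$ given in the preceding theorem. The key conceptual point is the standard correspondence: indecomposable modules for $A(V)$ on which $L(0)$ (which equals $[\omega]$ in $A(V)$ up to a shift) acts non-semisimply give rise, via the functor $U \mapsto L(U)$ described in Section 2, to $\N$-graded $V$-modules whose top component carries a non-semisimple $L(0)$-action; these are precisely logarithmic modules of $L(0)$-nilpotent rank $\geq 2$.

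First I would isolate, for each fixed $1 \le i \le p-1$, the $2$-dimensional ideal $\mathbb{I}_{h_{i,1}}$ appearing in the decomposition $A(\mathcal{W}(p)) = \bigoplus_{i=2p}^{3p-1} \mathbb{M}_{h_{i,1}} \oplus \bigoplus_{i=1}^{p-1} \mathbb{I}_{h_{i,1}} \oplus \mathbb{C}_{h_{p,1}}$. The claim I need is that $\mathbb{I}_{h_{i,1}}$, as a quotient algebra of $A(\triplet)$, is isomorphic to the algebra $\mathbb{C}[t]/(t^2)$ with $[\omega]$ acting as $h_{i,1} + t$. One way to see this is to pull back the description via the homomorphism $\Phi : A(\overline{M(1)}) \to A(\triplet)$ and the known structure $A(\overline{M(1)}) \cong \mathbb{C}[x,y]/(P(x,y))$ with $P(x,y) = y^2 - C_p(x-h_{p,1})\prod_{i=1}^{p-1}(x-h_{i,1})^2$: the double root of the defining polynomial at $x = h_{i,1}$ is exactly what produces the local ring $\mathbb{C}[t]/(t^2)$ at that point, and hence the nilpotent piece of Zhu's algebra. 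Over $\mathbb{I}_{h_{i,1}}$ there is then a unique indecomposable $2$-dimensional module $U_i^+$ on which $[\omega]$ acts via the Jordan block $\begin{pmatrix} h_{i,1} & 1 \\ 0 & h_{i,1} \end{pmatrix}$; viewing $U_i^+$ as an $A(\triplet)$-module by inflation and applying the functor $L(-)$ from Section 2 yields an $\N$-graded $\triplet$-module $\mathcal{P}_i^+ := L(U_i^+)$ with $\mathcal{P}_i^+(0) \cong U_i^+$, so $L(0)$ acts on the top component by the stated Jordan block. Since this block is non-scalar, $\mathcal{P}_i^+$ is genuinely logarithmic.

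The remaining assertion, self-duality, I would handle by identifying the contragredient module $(\mathcal{P}_i^+)'$ and showing it is isomorphic to $\mathcal{P}_i^+$. The contragredient of an $\N$-graded module is again $\N$-graded with the same top weight $h_{i,1}$ and with the transposed $L(0)$-action on the top component; the transpose of a single Jordan block of size $2$ is conjugate to itself, so $(\mathcal{P}_i^+)'(0) \cong U_i^+$ as an $A(\triplet)$-module. If one knows $\mathcal{P}_i^+$ is the \emph{unique} indecomposable $\N$-graded module with this top component (for instance because it is generated by its top component and is the projective cover within an appropriate block, or because the relevant $\mathrm{Ext}$-space is one-dimensional), then $(\mathcal{P}_i^+)' \cong \mathcal{P}_i^+$ follows.

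The step I expect to be the main obstacle is not the formal $A(V)$-to-$V$-module passage, which is routine given Section 2, but rather pinning down that the $\N$-graded module $\mathcal{P}_i^+ = L(U_i^+)$ is itself two-dimensional on top and well-behaved — in particular that no unexpected larger indecomposable is forced, and that self-duality genuinely holds rather than merely the weaker statement that $(\mathcal{P}_i^+)'$ has an isomorphic top component. Establishing self-duality cleanly really requires knowing the full socle/radical filtration of $\mathcal{P}_i^+$, or invoking the fusion/logarithmic-intertwiner machinery; in the cited works \cite{AM1}, \cite{AM7} this is presumably extracted from the detailed knowledge of the irreducible modules $\Lambda(i)$, $\Pi(i)$ and the self-extensions among them. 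So in summary: extract the nilpotent block $\mathbb{I}_{h_{i,1}}$ of Zhu's algebra, take its unique indecomposable $2$-dimensional representation, induce up to a $\triplet$-module, and then verify logarithmic-ness (immediate) and self-duality (the delicate part, via contragredient duality plus uniqueness of the indecomposable lift).
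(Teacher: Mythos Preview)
Your proposal is correct and follows essentially the same route the paper indicates: as stated just before the corollary, ``the existence of logarithmic representations is a consequence of the description of Zhu's algebra,'' and the corollary itself is simply cited from \cite{AM7} without an independent proof here. Your outline---identify the $2$-dimensional ideal $\mathbb{I}_{h_{i,1}}\cong\mathbb{C}[t]/(t^2)$, take its regular representation as the $A(\triplet)$-module $U_i^+$, and induce via $U\mapsto L(U)$---is precisely the mechanism behind that sentence, and your candid flagging of self-duality as the step requiring additional input (uniqueness of the indecomposable lift, or the socle filtration from \cite{AM1}) matches the fact that this part is deferred to the cited paper rather than argued in the present text.
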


\begin{remark}
The vertex algebra $\triplet$ also has $(p-1)$--logarithmic modules $\mathcal{P}_i ^-$ which can not be detected by $A(\triplet)$. These modules can be constructed explicitly as in \cite{AM4} and \cite{NT}. On the other hand, one can apply the Huang-Lepowsky tensor product  $\hat{\otimes}$  \cite{HLZ} and get:
$$ \mathcal{P}_i ^-:= \mathcal{P}^+ _{p-i} \hat{\otimes} \Pi(1). $$
\end{remark}

\begin{remark}
Almost everything in this section can be modified, along the lines of \cite{AM2} \cite{AM3}, to $N=1$ vertex superalgebras, by consideration of odd lattices and by tensoring $V_L$ with the free fermion vertex superalgebra.
\end{remark}

 \section{$\mathcal{W}$-algebra extensions of minimal models}

If we consider $L=\sqrt{pp'} Q$, where $p$ and $p'$ are relatively prime and strictly bigger than one, there are additional degrees of freedom entering the construction of screening operators.
These values allow us to construct more complicated vertex algebras, closely related
to affine $\mathcal{W}$-minimal models.

%It is possible to introduce a version of $\mathcal{W}(p,p')_Q$ at any minimal central charge.
For simplicity we only consider the case $Q=A_1$,  well studied in the physics literature.

 The setup is $L=\sqrt{pp'} \mathbb{Z}\a_1$, $\langle \a_1, \a_1 \rangle=2$. To avoid (annoying) radicals, let $\a = \sqrt{ p p'} \a_1$. Then
 $$ L= \Z \a, \quad \langle \a , \a \rangle = 2 p p'. $$

We construct $V_L$ as before but now we choose $$\omega_{p,p'}=\omega_{st}+\frac{p-p'}{2 pp'} \alpha(-2),$$
such that the central charge is $1-6\frac{(p-p')^2}{pp'}$ (minimal central charges \cite{W1}).
There are again two screening operators here  \cite{FGST2} \cite{FGST3} (cf.  \cite{AM6}, \cite{AM8}):
$${\Q}=e^{\alpha / p'}_0 \ \  {\rm and} \ \  \widetilde{\Q}=e^{-\alpha / p }_0$$
Although the rank is one, the replacement for $\mathcal{W}(p)_Q$ involves
{\em both} screenings, namely
$$\mathcal{W}_{p,p'}:={\rm Ker}_{V_L} {\Q} \cap {\rm Ker}_{V_L} {\widetilde{\Q}}.$$
Compared to $\mathcal{W}(p)_Q$ this vertex algebra is more complicated and it is no longer simple \cite{AM6}.
The inner structure of $V_L$, and of $\mathcal{W}_{p,p'}$, as a Virasoro algebra
module, can be visualized via the following diagram describing the semisimple filtration of $V_L$. Here all $\bullet$ symbols denote highest weight vectors for the Virasoro algebra and they
generate the socle part of $V_L$. Similarly, all $\bigtriangleup$ symbols are representatives of the top part in the filtration, etc.
 $$
 \xymatrix@!0{     & &  &  &  &  & \times  \ar[dr] &  &  & & & & \\
 & &  &  &  &      \bigtriangleup  \ar[ur]  \ar[rrd]  \ar[d] & &  \bullet & & & & & \\
 & & \circ  \ar[dr] &  &   & \circ \ar[rru] \ar[rrd] &  & \Box \ar[u] \ar[d] &   &
&  \Box  \ar[dr] &  &
\\
& \bigtriangleup  \ar[ur]  \ar[rrd]  \ar[d]   &  &  \bullet   &
& \bigtriangleup \ar[urr] \ar[drr] \ar[d]  \ar[u] &  & \bullet   &
& \bigtriangleup  \ar[ur]  \ar[rrd]  \ar[d] & &  \bullet &
\\
... & \circ \ar[rru] \ar[rrd] &  & \Box \ar[u] \ar[d] &
&  \circ \ar[rru] \ar[drr] &  & \ar[u] \Box \ar[d] &
& \circ \ar[rru] \ar[rrd] &  & \Box \ar[u] \ar[d] & ...
\\
... & \bigtriangleup \ar[urr]  \ar[u]  \ar@{.}[d] &  & \bullet  \ar@{.}[d] &
 & \bigtriangleup \ar[urr]  \ar[u]  \ar@{.}[d] &  & \bullet \ar@{.}[d]  &
& \bigtriangleup \ar[urr]  \ar[u]  \ar@{.}[d] &  & \bullet  \ar@{.}[d] & ... \\
... & & & &  & & & &  & & &  & ...
}
$$
The $\mathcal{W}$-algebra $ \mathcal{W}_{p,p'}$
is generated by all $\bullet$ (the socle part) and the vacuum vector $\times$. Clearly,
the socle part forms a nontrivial ideal in $\mathcal{W}(p,p')$.

\begin{conjecture} \label{conj-minimal}
Assume that $(p,p')=1$. The vertex algebra   $\WW_{p,p'}$ is $C_2$--cofinite with $ 2 p p' + \frac{(p-1) (p'-1)}{2}$--irreducible modules.
\end{conjecture}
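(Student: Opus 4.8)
The plan is to lift, to the two--screening setting, the arguments developed for the triplet $\mathcal{W}(p)=\mathcal{W}(p,1)$ in \cite{AM1}, \cite{AM7}. The first step is to pin down a finite set of strong generators of $\mathcal{W}_{p,p'}$. Reading off the semisimple filtration of $V_L$ pictured above, one expects $\mathcal{W}_{p,p'}$ to be strongly generated by $\omega$ together with a triplet $E,F,H$ of primary vectors of conformal weight $(2p-1)(2p'-1)$ living in the charge sectors $V_{\pm p'\alpha_1}$ and $V_0\subset V_L$ (recall $\alpha=\sqrt{pp'}\alpha_1$), obtained by applying the screenings dual to $\mathcal{Q}$ and $\widetilde{\mathcal{Q}}$ to extremal lattice vectors, exactly as $E,F,H$ were produced for $\mathcal{W}(p)$; that $\omega,E,F,H$ lie in $\mathrm{Ker}_{V_L}\mathcal{Q}\cap\mathrm{Ker}_{V_L}\widetilde{\mathcal{Q}}$ follows from the commutation of the screenings with the Virasoro field, and one must then verify that they generate. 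This is the ``soft'' part of the argument.

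For $C_2$--cofiniteness I would compute the Poisson algebra $\mathcal{P}(\mathcal{W}_{p,p'})=\mathcal{W}_{p,p'}/C_2(\mathcal{W}_{p,p'})$ and show it is finite dimensional, whence $C_2$--cofiniteness follows by Zhu's criterion. The images $\bar\omega,\bar E,\bar F,\bar H$ generate $\mathcal{P}(\mathcal{W}_{p,p'})$, and the relations one must establish are of two kinds: (i) quadratic relations among $\bar E,\bar F,\bar H$ of the shape $\bar H^2=-\bar E\bar F=\nu\,\bar\omega^{(2p-1)(2p'-1)}$ and $\bar E^2=\bar F^2=\bar E\bar H=\bar F\bar H=0$, mirroring the relations of $\mathcal{P}(\mathcal{W}(p))$ recalled above; and (ii) truncation relations $\bar\omega^{N}\bar E=\bar\omega^{N}\bar F=\bar\omega^{N}\bar H=0$ together with $\bar\omega^{M}=0$ for explicit $N,M$ depending on $p,p'$. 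As in \cite{AM1}, these should come from the vanishing, modulo $C_2$, of the Virasoro singular vectors of the Feigin--Fuchs Fock modules $M(1,\lambda)$ occurring in $V_L$ and in the relevant $V_L$--modules. Once (i) and (ii) are in place, $\mathcal{P}(\mathcal{W}_{p,p'})$ is spanned by $\{\bar\omega^{j},\bar\omega^{j}\bar E,\bar\omega^{j}\bar F,\bar\omega^{j}\bar H\}$ for finitely many $j$, hence is finite dimensional. A softer alternative uses that $\mathcal{W}_{p,p'}$ contains the rational, $C_2$--cofinite minimal model $L(c_{p,p'},0)$; but $\mathcal{W}_{p,p'}$ is an infinite sum of minimal--model modules, so a module--finiteness criterion for extensions cannot be applied directly and the explicit computation seems unavoidable.

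For the count of irreducibles, note that the socle part of $V_L$ spans a proper ideal $I\subset\mathcal{W}_{p,p'}$ with $\mathcal{W}_{p,p'}/I\cong L(c_{p,p'},0)$ (the image of the vacuum generates the quotient, which is $C_2$--cofinite and therefore the minimal model). Pulling the $\frac{(p-1)(p'-1)}{2}$ irreducible minimal--model modules back along $\mathcal{W}_{p,p'}\twoheadrightarrow L(c_{p,p'},0)$ produces that many irreducible $\mathcal{W}_{p,p'}$--modules on which $I$ acts trivially, while the remaining $2pp'$ irreducibles, on which $I$ acts nontrivially, should be realized as the simple subquotients of the screening kernels on the $2pp'$ irreducible lattice modules $V_\gamma$, $\gamma\in L^\circ/L$ (note $|L^\circ/L|=2pp'$, which is consistent with the $2p$ modules $\Lambda(i),\Pi(i)$ in the case $p'=1$). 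Completeness of this list, hence the exact number $2pp'+\frac{(p-1)(p'-1)}{2}$, should be read off from the structure of Zhu's algebra $A(\mathcal{W}_{p,p'})$: following \cite{AM7}, build a homomorphism from $A$ of the corresponding ``singlet'' subalgebra $\mathrm{Ker}_{M(1)}\mathcal{Q}\cap\mathrm{Ker}_{M(1)}\widetilde{\mathcal{Q}}$ into $A(\mathcal{W}_{p,p'})$, determine its image and kernel, and count the simple blocks; the non--semisimple blocks would simultaneously yield the logarithmic $\mathcal{W}_{p,p'}$--modules. As a partial substitute one always has the a priori bound (number of irreducibles) $\le\dim A(\mathcal{W}_{p,p'})\le\dim\mathcal{P}(\mathcal{W}_{p,p'})$.

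The main obstacle is the $C_2$--cofiniteness step, more precisely the production of the truncation relations (ii): for the triplet this rested on closed formulas for the Virasoro/Fock singular vectors involved, whereas for a genuine pair $(p,p')$ with two nontrivial screenings no such closed form is available and the combinatorics of the two screening directions is substantially more intricate. A secondary difficulty is establishing completeness of the module list without already controlling all of $A(\mathcal{W}_{p,p'})$, which at present is understood only for $\mathcal{W}(p)$ and $\mathcal{SW}(m)$; proving $C_2$--cofiniteness first and then sharpening the a priori bound above to an equality is the natural fallback.
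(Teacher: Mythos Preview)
The statement you are addressing is labeled a \emph{Conjecture} in the paper; there is no proof of it in general. The paper establishes only the case $p'=2$ (the theorem immediately following the conjecture, drawing on \cite{AM6}, \cite{AM8}), and your proposal is explicitly a strategy rather than a proof. With that understood, your outline is broadly consonant with the methods the authors actually use for $\mathcal{W}(p)$ and $\mathcal{W}_{p,2}$: produce strong generators, establish relations in $\mathcal{P}$ to get $C_2$--cofiniteness, then analyze $A(\mathcal{W}_{p,p'})$ via the singlet subalgebra, and split the irreducibles into the $\tfrac{(p-1)(p'-1)}{2}$ minimal-model pullbacks (on which the socle ideal acts trivially) and the $2pp'$ lattice-type modules.

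Two places where your sketch diverges from what the paper does in the $p'=2$ case. First, the generators: you propose $E,F,H$ obtained by applying screenings dual to $\mathcal{Q},\widetilde{\mathcal{Q}}$ to extremal lattice vectors, sitting in $V_{\pm p'\alpha_1}$. In the paper's $p'=2$ realization one instead sets $F=\mathcal{Q}\,e^{-3\alpha/2}$ and then builds $H=GF$, $E=G^2F$ from a \emph{new} operator $G=\sum_{i\ge 1} e^{\alpha/2}_{-i}e^{\alpha/2}_{i}/i$, not either of the original screenings; the resulting charges are $\pm\alpha,0$ (that is, $\pm\sqrt{pp'}\,\alpha_1$), not $\pm p'\alpha_1$. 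The appearance of this auxiliary $G$ is precisely the sort of two-screening complication you flag as the main obstacle, and it already enters at the ``soft'' generator step, not only in the truncation relations. Second, the conformal weight $(2p-1)(2p'-1)$ you quote happens to agree with the $p'=2$ value $h=3(2p-1)$, but absent an analogue of $G$ for general $p'$ there is no reason to expect this formula to persist.

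Your identification of the principal difficulty --- producing the truncation relations in $\mathcal{P}(\mathcal{W}_{p,p'})$ without closed Fock singular-vector formulas --- matches the authors' own assessment; they remark that the techniques of \cite{AM5}, \cite{AM8} should suffice but leave the general case open. In sum: no genuine error in the programme, but the generator construction needs to be amended to reflect the role of the extra screening $G$, and what you have is a plausible outline rather than a proof of a statement the paper itself does not prove.
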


\subsection{The triplet vertex algebra $\WW_{p,2}$ }

There are not many rigorous results about the  $\mathcal{W}$-algebras $\WW_{p,p'}$, except for $p'=2$ \cite{AM5}, \cite{AM8}.
We believe that some of the techniques introduced in \cite{AM5}, \cite{AM8}
are sufficient to prove the $C_2$-cofiniteness for all $p$ and $p'$.

The triplet vertex algebra $\WW_{p,2}$ can be realized as a subalgebra of $V_L$ generated by $\omega$ and primary vectors
$$ F = {\Q} e ^{-3 \a / 2}, \quad H = G F, \quad E = G ^2 F,$$
where $G$ is (new) screening operator defined by
$$ G=\sum_{i=1} ^{\infty} \frac{e ^{\a/2} _{-i} e ^ {\a/2}_i}{i} . $$

 Therefore, the triplet vertex algebra $\WW_{p,2}$ is $\mathcal{W}$--algebra of type
$$\mathcal{W}(2, h ,  h ,  h ), \quad (h = (2n+1) (pn+p-1)). $$

The next result shows that Conjecture \ref{conj-minimal} holds for $p'=2$.

\begin{theorem} (\cite{AM6}, \cite{AM8}) We have:
\begin{itemize}
\item[(1)] Every Virasoro minimal model for central charge $c_{p,2}$ is a module for $\WW_{p,2}$.
 \item[(2)] ${\WW}_{p,2}$ has exactly  $4 p + \frac{p-1}{2}$  irreducible modules.
\item[(3)] ${\WW}_{p,2}$ is  $C_2$--cofinite.
\item[(4)] ${\WW}_{p,2}$ is irrational and admits logarithmic modules.
\end{itemize}
\end{theorem}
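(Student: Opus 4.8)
\emph{Proof strategy.} The plan is to adapt, step by step, the proof of the corresponding statements for the $(1,p)$ triplet $\mathcal{W}(p)$ from \cite{AM1}, \cite{AM7}, using the free-field realization of $\WW_{p,2}$ by $\omega$ and the primary vectors $F=Q\,e^{-3\alpha/2}$, $H=GF$, $E=G^2F$ recorded above, together with the techniques of \cite{AM5}, \cite{AM8}. The load-bearing step is a finite presentation of the Poisson algebra $\mathcal{P}(\WW_{p,2})=\WW_{p,2}/C_2(\WW_{p,2})$; everything else is extracted from it. Granting the strong generation by $\omega,E,F,H$ (i.e. that $\WW_{p,2}$ is of type $\mathcal{W}(2,h,h,h)$), one computes the images $\bar{\omega},\bar{E},\bar{F},\bar{H}$ and their relations, which I expect to have the same shape as for $\mathcal{W}(p)$: the quadratic relations $\bar{E}^2=\bar{F}^2=\bar{H}\bar{E}=\bar{H}\bar{F}=0$ and $\bar{H}^2,\bar{E}\bar{F}\in\mathbb{C}\,\bar{\omega}^{\,m}$, together with null-vector relations $\bar{\omega}^{\,N}\bar{E}=\bar{\omega}^{\,N}\bar{F}=\bar{\omega}^{\,N}\bar{H}=0$. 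The quadratic relations hold because $E,F,H$ are primary with $E_{(n)}F,\dots\in C_2$ for suitable $n$, while the vanishing $\bar{\omega}^{\,N}\bar{E}=\dots=0$ comes from singular vectors in $V_L$, located either by direct computation with the screenings or by reading off the semisimple Virasoro filtration of $V_L$ pictured above. These relations force $\mathcal{P}(\WW_{p,2})$ to be spanned by $\bar{\omega}^{\,j}$ and $\bar{\omega}^{\,j}\bar{E},\bar{\omega}^{\,j}\bar{F},\bar{\omega}^{\,j}\bar{H}$ with $j$ bounded, so $\dim\mathcal{P}(\WW_{p,2})<\infty$; since there is the standard surjection $\mathcal{P}(V)\twoheadrightarrow\mathrm{gr}\,A(V)$, this already proves $C_2$-cofiniteness, part (3).

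\emph{Parts (1) and (2).} For (1), let $J\subset\WW_{p,2}$ be the ideal generated by $E,F,H$ (the socle ideal visible in the diagram above). Then $\WW_{p,2}/J$ is a vertex algebra generated by the image of $\omega$, hence a quotient of the universal Virasoro vertex algebra of central charge $c_{p,2}$; showing that the level-$(p-1)$ singular vector is annihilated — using the complex built from the screenings $Q,\widetilde{Q}$, or equivalently the top layer of the semisimple filtration of $V_L$ — identifies $\WW_{p,2}/J$ with the simple minimal model $L(c_{p,2},0)$. Pulling modules back along $\WW_{p,2}\twoheadrightarrow L(c_{p,2},0)$ shows every minimal-model module is a $\WW_{p,2}$-module, which is (1), and contributes the $(p-1)/2$ irreducibles $L(c_{p,2},h_{r,1})$ (with $h_{r,1}=h_{p-r,1}$). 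For (2), I would compute $A(\WW_{p,2})$ by the method of \cite{AM7}: construct a homomorphism from the Zhu algebra of the corresponding singlet subalgebra $\overline{M}(p,2)=\mathrm{Ker}_{M(1)}Q\cap\mathrm{Ker}_{M(1)}\widetilde{Q}$ into $A(\WW_{p,2})$, determine its kernel using the presentation of $\mathcal{P}(\WW_{p,2})$, and read off a decomposition of $A(\WW_{p,2})$ into a direct sum of full matrix algebras $M_2(\mathbb{C})$, two-dimensional indecomposable ideals, and one-dimensional ideals. Zhu's correspondence then produces the complete list of irreducible $\N$-graded $\WW_{p,2}$-modules — the $4p$ attached to the cosets $L^\circ/L$ plus the $(p-1)/2$ minimal-model modules — giving the count $4p+\tfrac{p-1}{2}$, and one checks these are pairwise inequivalent and exhaustive.

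\emph{Part (4).} Irrationality is immediate from $C_2$-cofiniteness together with the non-semisimplicity of $A(\WW_{p,2})$ (the two-dimensional indecomposable ideals found in Step (2)). The same non-semisimplicity — an $A(\WW_{p,2})$-module $U$ on which the image of $\omega$, i.e. $L(0)$, has a nontrivial Jordan block — yields through $U\mapsto L(U)$ a logarithmic $\N$-graded $\WW_{p,2}$-module; alternatively, such modules can be built directly by letting the logarithmic screening $G$ act on ordinary Fock $V_L$-modules, which manufactures the $L(0)$-Jordan blocks explicitly (compare \cite{AM4}, \cite{NT}).

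\emph{Main obstacle.} The technical crux is the input to the first step: pinning down the exact singular vectors in $V_L$ that produce the null-vector relations bounding the powers of $\bar{\omega}$ and that express $\bar{H}^2,\bar{E}\bar{F}$ in terms of $\bar{\omega}$, and making the strong-generation statement effective enough to control $\mathcal{P}(\WW_{p,2})$ completely. This is delicate precisely because $G$ is a non-local (logarithmic) screening: controlling the modes of $H=GF$ and $E=G^2F$, and verifying the commutations $[G,Q]=[G,\widetilde{Q}]=0$ on all relevant Fock modules, requires careful bookkeeping with infinite sums. Once the finite presentation of $\mathcal{P}(\WW_{p,2})$ and the computation of $A(\WW_{p,2})$ are in hand, parts (1)--(4) follow the $\mathcal{W}(p)$ template closely.
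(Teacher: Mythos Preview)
The paper contains no proof of this theorem: it is a survey, and the statement is cited to \cite{AM6}, \cite{AM8} with no argument given here. So there is nothing in the present paper to compare your proposal against.

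That said, your outline is broadly in the spirit of \cite{AM6}, \cite{AM8}, with one substantive divergence. For part~(2) you propose to compute $A(\WW_{p,2})$ completely (via a singlet-to-triplet homomorphism as in \cite{AM7}) and then read off the irreducibles from Zhu's correspondence. In \cite{AM6}, \cite{AM8} the logic runs the other way: the $4p$ irreducibles are constructed explicitly as $\mathcal{W}_{p,2}$-subquotients of the lattice modules $V_{L+\gamma}$, $\gamma\in L^{\circ}/L$, the $(p-1)/2$ minimal-model modules come from the quotient map $\WW_{p,2}\twoheadrightarrow L(c_{p,2},0)$ as you describe, and exhaustiveness is obtained from the $C_2$-cofiniteness bound $\dim A(\WW_{p,2})\le \dim\mathcal{P}(\WW_{p,2})$ together with knowledge of which top components are one- versus two-dimensional. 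The full decomposition of $A(\WW_{p,2})$ that you want to use as input is in fact stated in this paper only for $p=3$ (Theorem following the one under discussion), and even there it is cited rather than proved; for general $p$ your route would require work beyond what \cite{AM6}, \cite{AM8} contain. Your approach to (1), (3), (4) matches the template of those references, though you should not expect the $C_2$-relations to have literally the same shape as for $\mathcal{W}(p)$: the non-simplicity of $\WW_{p,2}$ (the socle ideal) makes the null-vector analysis genuinely harder, which is why \cite{AM8} is a separate paper.
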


Let $p=3$. Then $\WW_{3,2}$ is called the triplet $c=0$ vertex algebra.
Let us recall Zhu's algebra for this vertex algebra.

 Generators of $A(\WW_{3,2})$:
 $[\omega], [H], [E], [F].$

\begin{theorem} \cite{AM7}
Zhu's algebra $A(\WW_{3,2})$ decomposes as a direct sum:
$$A(\WW_{3,2}) = \bigoplus_{h \in  S ^{(2)} }   \mathbb{M}_{h}  \oplus \bigoplus _{h \in S ^{(1)}} \mathbb{I}_{h} \oplus {\C}_{-1/24},$$
$$ S ^{(2)} = \{ 5, 7, \tfrac{10}{3}, \tfrac{33}{8}, \tfrac{21}{8}, \tfrac{35}{24} \} ,
S ^{(1)} = \{  0, 1, 2, \tfrac{1}{3}, \tfrac{1}{8}, \tfrac{5}{8}, \tfrac{-1}{24} \}, $$
where $\mathbb{M}_{h}$ is ideal isomorphic to $M_2 (\C)$, $h \in S^{(2)}$,
 $\mathbb{I}_{h}$ is $2$--dimensional ideal, $h \in S^{(1)}$, $h \ne 0$, $h \ne -1/24$,
 ${\C}_{-1/24}$ is $1$-dimensional ideal,
$\mathbb{I}_{0}$ is $3$--dimensional ideal.
\end{theorem}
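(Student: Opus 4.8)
The plan is to compute Zhu's algebra $A(\WW_{3,2})$ following the method developed in \cite{AM7} for the triplet $\triplet$, adapted to the $c=0$ situation, where the role of the singlet $\overline{M(1)}$ is played by the corresponding $p=3$, $p'=2$ singlet (logarithmic $(3,2)$-minimal) vertex algebra sitting inside $\WW_{3,2}$ as the $G$-invariants, or more precisely as the commutant of the Heisenberg action. First I would establish a surjective algebra homomorphism $\Phi$ from Zhu's algebra of this singlet-type subalgebra to $A(\WW_{3,2})$, parallel to the homomorphism $\Phi : A(\overline{M(1)}) \to A(\triplet)$ of the earlier theorem, and identify $A(\WW_{3,2})$ with a quotient of a commutative polynomial ring modulo the relations coming from the $C_2$-algebra $\mathcal{P}(\WW_{3,2})$ together with the singular-vector relations. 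Since $\WW_{3,2}$ is $C_2$-cofinite (part (3) of the preceding theorem), $A(\WW_{3,2})$ is finite dimensional, and the associated graded of the standard filtration is a quotient of $\mathcal{P}(\WW_{3,2})$; the first task is thus to pin down $\mathcal{P}(\WW_{3,2})$ explicitly from the free generators $\omega, H, E, F$ and to locate the relevant null vectors in $\WW_{3,2}$ whose images in $A(\WW_{3,2})$ give the defining ideal.

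Next I would use the classification of irreducible $\WW_{3,2}$-modules — there are $4p + \tfrac{p-1}{2} = 13$ of them for $p=3$, by part (2) of the theorem — together with the one-to-one correspondence between irreducible $A(V)$-modules and irreducible $\N$-graded $V$-modules from Section 2. Computing the conformal weights $h$ of the top components of all $13$ irreducibles, one obtains exactly the union $S^{(2)} \cup S^{(1)}$. The weights in $S^{(2)} = \{5, 7, \tfrac{10}{3}, \tfrac{33}{8}, \tfrac{21}{8}, \tfrac{35}{24}\}$ should be precisely those at which $\omega$ acts on a $2$-dimensional top component with a \emph{nonzero} nilpotent part in its Jordan form, forcing the corresponding block of $A(\WW_{3,2})$ to be a full matrix algebra $M_2(\C)$ (as in the corollary producing the logarithmic modules $\mathcal{P}_i^+$); the weights in $S^{(1)}\setminus\{0, -1/24\}$ give $2$-dimensional commutative local blocks $\mathbb{I}_h$ carrying a single Jordan block for $L(0)$ but no extra semisimple generator; $h = -1/24$ gives the $1$-dimensional block $\C_{-1/24}$; and $h=0$ is special, giving a $3$-dimensional block $\mathbb{I}_0$ reflecting the fact that the vacuum sector at $c=0$ sees both the vacuum and the additional nilpotent structure responsible for the logarithmic $c=0$ phenomenon. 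The dimension bookkeeping $6\cdot 4 + 5\cdot 2 + 1 + 3 = 24 + 10 + 1 + 3 = 38$ then must be checked against an independent computation of $\dim A(\WW_{3,2})$, ideally via $\dim \mathcal{P}(\WW_{3,2})$ if the analogue of part (5) of the triplet conjecture holds here.

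The main obstacle I expect is twofold. First, identifying the precise singular vectors in $\WW_{3,2}$ that cut out $O(V)$: unlike the $(p,1)$ case where the singlet $\overline{M(1)}$ has a clean description of $A(\overline{M(1)}) = \C[x,y]/\langle P(x,y)\rangle$ with $P$ an explicit curve, the $(p,2)$ singlet is genuinely more complicated and its Zhu algebra must first be determined (or at least its image under $\Phi$ controlled), which is where most of the real work lies. Second — and this is the genuinely new feature at $c=0$ — the block at $h=0$ is $3$-dimensional rather than $2$-dimensional, so the uniform argument that works for all other weights breaks down exactly in the vacuum sector; one must show directly that $[\omega]$, together with $[\omega]^2$ (or an independent nilpotent element built from the null vectors at weight $0$), spans a $3$-dimensional commutative algebra there and that $[H], [E], [F]$ act trivially on it. I would handle this last point by an explicit low-order computation in $\WW_{3,2}$ using the known embedding of the $c=0$ Virasoro algebra and its null vector at level $1$ (or wherever it sits for $c_{3,2}$), checking that the corresponding relation in $A(\WW_{3,2})$ is $[\omega]([\omega]-0)([\omega]-h')=0$ for the appropriate second weight $h'$ appearing in the vacuum Verma module rather than $[\omega][\omega]=0$. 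Once all blocks are accounted for and the dimensions add up, the direct sum decomposition follows because distinct central characters (the weights $h$) separate the blocks of the finite-dimensional algebra $A(\WW_{3,2})$.
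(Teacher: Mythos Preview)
Your overall strategy --- building a surjection $\Phi$ from the Zhu algebra of the appropriate singlet-type subalgebra onto $A(\WW_{3,2})$, bounding dimensions via $\mathcal{P}(\WW_{3,2})$ and $C_2$-cofiniteness, and then matching blocks against the list of $13$ irreducibles --- is indeed the method of \cite{AM7} that the paper is invoking (the paper itself only states the result and refers back to \cite{AM7} for the proof). So at the level of architecture you are on the right track.

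However, you have reversed the representation-theoretic meaning of the two kinds of blocks, and this would make the identification step fail. The image $[\omega]$ is \emph{central} in $A(\WW_{3,2})$, so on any simple block isomorphic to $M_2(\C)$ it must act as the scalar $h$; a nonzero nilpotent part for $L(0)$ on a $2$-dimensional top is therefore impossible in the $\mathbb{M}_h$ summands. The $\mathbb{M}_h$ with $h\in S^{(2)}$ correspond to the six irreducible $\WW_{3,2}$-modules whose top component is genuinely $2$-dimensional and on which $L(0)$ acts as $h\cdot\mathrm{id}$ while $[E],[F],[H]$ act nontrivially --- exactly as the $\Pi$-type modules do for $\triplet$. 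Conversely, the $2$-dimensional commutative local ideals $\mathbb{I}_h$ (for $h\in S^{(1)}$, $h\ne 0,-1/24$) are the blocks where $[\omega]-h$ is a nonzero nilpotent; these are the ones that force the existence of logarithmic modules, precisely as in the corollary producing the $\mathcal{P}_i^+$ in the $\triplet$ case. Your parenthetical citation of that corollary under the $M_2(\C)$ blocks is the tell: that corollary is a consequence of the $\mathbb{I}_h$ summands, not the $\mathbb{M}_h$ ones.

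Once you swap these roles, the rest of your plan is fine: the $h=0$ block is indeed the exceptional $3$-dimensional piece (this is what the paper's subsequent remark about $L(0)$-nilpotent rank three for the projective cover of the trivial module is recording), and the dimension count against $\dim\mathcal{P}(\WW_{3,2})$ then closes the argument as in \cite{AM7}.
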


\begin{remark}
 The previous theorem shows that in the category of  $\WW_{3,2}$--modules, the projective cover of trivial representation should have $L(0)$--nilpotent rank three.  This result is used in the fusion rules analysis for the $c=0$ triplet algebra (cf. \cite{GRW-1}, \cite{GRW-2}).

 In \cite{AM8}, we proved that in the category of $\WW_{p,2}$--modules, the projective cover of every minimal model should have $L(0)$--nilpotent rank three. We expect  the same result to hold for general minimal $(p,p')$-models.
\end{remark}

\section{Construction of logarithmic modules and related problems}

\label{log-section}
In Section \ref{part-1} we propose a large family of (conjecturally) $C_2$-cofinite vertex algebras coming from integral lattices.
Now we examine indecomposable representations for these algebras.

\subsection{Progenerator and logarithmic modules}

A central question in representation theory of vertex algebra (or any algebraic structure) is to understand the structure
of indecomposable modules.   As it is well-known, for rational \footnote{Here for simplicity we assume strong rationality, meaning that for a given VOA every (weak) module is completely reducible.} vertex
algebras it is sufficient to classify irreducible modules. In contrast, for irrational $C_2$-cofinite  vertex algebras (with finitely many irreps)
it is essential to analyze the projective covers $P_i$ of irreducibles $M_i$, $ i \in {\rm Irr}$  \cite{Hu1}. Provided that we have a good  description of $P_i \rightarrow M_i$,  we can then form
a progenerator $P=\oplus_{i \in  {\rm Irr}} P_i$, and compute
$$\mathcal{A}={\rm End}_{V}(P)^{op},$$
which is known to be finite-dimensional.
This associative algebra plays a major rule in representation theory, and the least it gives the Morita equivalence of abelian categories
$${\rm f.g} \  V-Mod \  \cong  \ {\rm f.d.} \  \mathcal{A}-Mod.$$
As we shall see later, the same algebra is also important for purposes of modular invariance.
Because the category $V-Mod$ has a natural braided tensor category structure \cite{HLZ},
it is expected that one can do better and find a braided Hopf algebra $\mathcal{A}$
such that the above equivalence holds at the level of braided tensor categories
(this is known in some cases \cite{NT}, \cite{FGST2}, \cite{FGST3}).

The main problem here is that there is no good construction of $P_i$ even in the simplest case due to the fact that projective modules of  irrational $C_2$-cofinite vertex algebras are often {\em logarithmic}, that is, non-diagonalizable with respect to the Virasoro operator $L(0)$.
%The main problem here is that there is no good construction of $P_i$ even in the simplest case due to the fact that projective modules of  irrational $C_2$-cofinitel vertex algebras are {logarithmic}.
At the same time the $C_2$-cofinite vertex algebra is  conformally embedded inside a rational lattice vertex algebra, which is known to have no logarithmic modules.
Thus we cannot simply use the larger algebra to construct all relevant modules for the smaller algebra (except perhaps for the irreducibles \cite{AM1}).

Thus, in order to maneuver ourselves into a situation in which ${\rm End}_V(P)$ can be studied, we
first discuss construction of  general { logarithmic} modules.

\subsection{Screenings and logarithmic modules}

Here we propose a very general construction of logarithmic modules for vertex
algebras coming from screenings operators as in Section \ref{part-1}. As we shall see, in some cases these modules are indeed projective covers.
Our methods is based on screenings, local systems of vertex operators \cite{LL}, together with deformation of the vertex algebra action \cite{Li} (cf. also \cite{AM5}).
Conjecturally, the method introduced here is sufficient to construct all projective covers for
vertex algebras considered in Section \ref{part-1}.

Let $V$ be a vertex algebra of CFT type  and let $v \in V$  be a primary vector of conformal weight one, and
$$Y(v,x)=\sum_{n \in \mathbb{Z}} v_n x^{-n-1}.$$
As in \cite{Li} we let
\begin{equation} \label{delta-main}
\Delta(v,x) = x^{v_0} \exp \left( \sum_{n=1} ^{\infty}
\frac{v_n}{-n}(-x)^{-n} \right).
\end{equation}
If $v_0$ acts semisimply on $V$ and $w$ is its eigenvector, the expression $x^{v_0} w$ is defined as $x^{\lambda} w$,  where $\lambda$ is the corresponding eigenvalue.
But (\ref{delta-main}) is ambiguous if $v_0$ does not act semisimply,
Still the next result \cite{AM5} easily follows from \cite{Li}.

\begin{theorem} \label{delta}
Assume that $V$ and $v$ are as above. Let $\overline{V}$
be the vertex subalgebra of $V$ such that $\overline{V} \subseteq
\mbox{Ker}_V v_0$. Assume that $(M,Y_M)$ is
a $V$--module.
Define the pair $(\widetilde{M}, \widetilde{Y}_{\widetilde{M} })$
such that
$$\widetilde{M} = M \quad \mbox{as a vector space}, $$
$$ \widetilde{Y}_{\widetilde{M} } (a, x) = Y_{M} (\Delta(v,x) a, x) \quad \mbox{for} \ a \in \overline{V}. $$
 Then  $(\widetilde{M},
\widetilde{Y}_{\widetilde{M} } )$ is a  $\overline{V}$--module.
\end{theorem}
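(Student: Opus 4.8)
The plan is to reduce the statement to Li's theorem on $\Delta$-operators by being careful about two things: the domain of the modified vertex operator and the meaning of the expression $x^{v_0}$. The essential content of \cite{Li} is that for a primary weight-one vector $v$ and a $V$-module $(M,Y_M)$, the map $a \mapsto Y_M(\Delta(v,x)a,x)$ defines a new $V$-module structure on the vector space $M$ \emph{provided} $\Delta(v,x)$ makes sense, i.e.\ provided one can give an unambiguous meaning to $x^{v_0}$. When $v_0$ acts semisimply this is Li's original result verbatim. The point of Theorem \ref{delta} is that even when $v_0$ is \emph{not} semisimple on all of $V$, one still gets a well-defined module \emph{for the smaller algebra} $\overline{V} \subseteq \mathrm{Ker}_V\, v_0$, because on $\overline{V}$ the operator $v_0$ acts as zero, so $x^{v_0}$ restricted to $\overline{V}$ is simply the identity and the ambiguity disappears.

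So the first step is to observe that for $a \in \overline{V}$ we have $v_0 a = 0$, hence $x^{v_0}a = a$ and
\[
\Delta(v,x)a = \exp\!\left( \sum_{n=1}^{\infty} \frac{v_n}{-n}(-x)^{-n}\right) a,
\]
which is an unambiguous element of $\overline{V}((x))$ — here one checks that the operators $v_n$, $n \geq 1$, lower conformal weight, so the exponential series applied to a fixed homogeneous $a$ truncates, and moreover that each $v_n$ with $n\geq 1$ preserves $\mathrm{Ker}_V\, v_0$ (because $[v_0,v_n] = (v_0 v)_{\,?}$-type brackets vanish since $v$ is primary of weight one and $v_0 v$ lies in weight one; more directly, $v_0$ commutes with all $v_n$ up to terms involving $(v_0 v)_n$, and one uses that $\overline{V}$ is a vertex \emph{subalgebra}). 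Thus $\Delta(v,x)$ maps $\overline{V}$ into $\overline{V}((x))$, and $a \mapsto Y_M(\Delta(v,x)a,x)$ is a genuine field on $M$ for each $a \in \overline{V}$.

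The second step is to invoke Li's calculation. Li proves the key operator identities — the $\Delta$-conjugation formula for vertex operators and the fact that $\Delta(v,x)$ intertwines $Y$ with the "shifted" action — under the sole hypothesis that the relevant expressions are defined; the semisimplicity of $v_0$ is used in \cite{Li} only to guarantee that $x^{v_0}$ makes sense globally. Since we have just arranged that all expressions appearing in $\widetilde{Y}_{\widetilde{M}}(a,x)$ for $a \in \overline{V}$ are well-defined, Li's identities apply verbatim to these $a$, and they yield: (i) the weak commutativity (locality) of the fields $\widetilde{Y}_{\widetilde{M}}(a,x)$, $a \in \overline{V}$, among themselves; (ii) the correct action of the translation operator; and (iii) $\widetilde{Y}_{\widetilde{M}}(\mathbf 1,x) = \mathrm{id}_M$. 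Together with the fact that $\overline{V}$ is generated by $\overline{V}$ (trivially) and that the vacuum and conformal structure are inherited, these are exactly the axioms for a $\overline{V}$-module structure on $\widetilde{M} = M$. One should also verify that the $L(0)$-grading is the expected one, but since $\omega \in \overline{V}$ (the screening $v_0$ commutes with the Virasoro algebra in our setting) this comes for free from (ii)–(iii).

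The main obstacle — really the only subtle point — is the first step: confirming that $\Delta(v,x)$ genuinely preserves $\overline{V}$, i.e.\ that applying the exponential of the $v_n$ ($n\geq1$) to an element annihilated by $v_0$ again lands in $\mathrm{Ker}_V\,v_0$, and that the resulting series lies in $\overline{V}((x))$ rather than merely $V((x))$. This is where one must use that $\overline{V}$ is closed under the operators $v_n$, which follows because $v_n = \mathrm{Res}_x x^n Y(v,x)$ and $\overline{V}$ being a vertex subalgebra containing $v$ is closed under all modes of $v$ — but one must double-check compatibility with $\mathrm{Ker}_V\, v_0$, using $[v_0, v_n] = \sum_{i\geq 0}\binom{0}{i}(v_i v)_{\,n-i}$, which vanishes because $v$ primary of weight one forces $v_i v = 0$ for $i \geq 1$ and $v_0 v$ contributes a term that, acting after $v_0$, stays in the kernel. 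Once this closure is in hand, everything else is Li's machinery applied on the restricted domain, and no genuinely new computation is required.
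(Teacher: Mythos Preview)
The paper itself gives no proof of this theorem; it simply states that the result ``easily follows from \cite{Li}'' (with a pointer to \cite{AM5} for details). Your proposal correctly identifies the essential mechanism: Li's construction requires $x^{v_0}$ to be meaningful, and by restricting the domain to $\overline{V}\subseteq\mathrm{Ker}_V v_0$ the factor $x^{v_0}$ becomes the identity, so $\Delta(v,x)a$ is an unambiguous element of $V((x))$ for $a\in\overline{V}$; Li's operator identities then carry over verbatim to yield the $\overline{V}$--module axioms for $\widetilde{Y}_{\widetilde{M}}$. That is exactly the intended argument.

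Your final paragraph, however, introduces a spurious difficulty. You do \emph{not} need $\Delta(v,x)$ to map $\overline{V}$ into $\overline{V}((x))$: since $Y_M$ is defined on all of $V$, it is enough that $\Delta(v,x)a\in V((x))$, which is automatic once $v_0a=0$ and the $v_n$ ($n\ge1$) lower weight. In particular the hypothesis ``$\overline{V}$ \dots\ containing $v$'' that you invoke is neither assumed in the theorem nor true in the applications (the screening vectors $e^{-\alpha_i/\sqrt{p}}$ do not lie in $\mathcal{W}(p)_Q$). The iterate/Jacobi axiom for $\widetilde{Y}_{\widetilde{M}}$ needs only that $\overline{V}$ is a vertex subalgebra (so products $a_nb$ remain in $\overline{V}$ and $\Delta(v,x)$ is again well defined on them) together with Li's conjugation identity $\Delta(v,x_2)Y(a,x_0)=Y(\Delta(v,x_2+x_0)a,x_0)\Delta(v,x_2)$ applied to $a,b\in\overline{V}$. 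Drop the closure discussion and the argument is clean and complete.
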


\begin{corollary}
Assume that $(M,Y_M)$ is a $V$--module such that $L(0)$ acts
semisimply on $M$. Then $(\widetilde{M},
\widetilde{Y}_{\widetilde{M} })$ is a logarithmic
$\overline{V}$--module if and only if $v_0$ does not act
semisimply on $M$.
\end{corollary}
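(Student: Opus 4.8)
The plan is to deduce this corollary directly from Theorem \ref{delta} together with a careful analysis of how the operator $L(0)$ behaves on $\widetilde{M}$. First I would recall that $\overline{V}$ contains the conformal vector $\omega$ (this is implicit in the setup, since in all applications $v_0$ annihilates $\omega$ because $v$ is primary of weight one, hence commutes with the Virasoro action; more precisely $v_0 \omega = 0$ as $v$ being primary of weight one means $L(n)v = 0$ for $n \geq 1$, $L(0)v = v$, and a standard computation gives $v_0\omega = -L(-1)v \cdot 0 + \dots$, so one checks $\omega \in \mathrm{Ker}_V v_0$). Thus the new module structure $\widetilde{Y}_{\widetilde{M}}$ is defined on $\omega$, and the point is to compute $\widetilde{Y}_{\widetilde{M}}(\omega,x)$ and extract its $L(0)$-mode.

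The key computation is the following: since $\Delta(v,x)\omega = x^{v_0}\exp\!\left(\sum_{n\geq 1}\frac{v_n}{-n}(-x)^{-n}\right)\omega$, and because $v$ is primary of weight one one has $v_n\omega = 0$ for $n \geq 2$ while $v_1\omega = \langle v,v\rangle\,\mathbf{1}$ (a scalar times the vacuum) and $v_0\omega = 0$ — wait, one must be careful, but the upshot established in \cite{Li} and \cite{AM5} is that $\widetilde{Y}_{\widetilde{M}}(\omega,x) = Y_M(\omega,x) + (\text{correction involving }v_0)\,x^{-1} + \dots$, so that the new $L(0)$-operator, call it $\widetilde{L}(0)$, satisfies $\widetilde{L}(0) = L(0) + v_0 + (\text{scalar})$. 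I would isolate this identity as the technical heart of the argument. Granting it, the corollary is immediate: if $L(0)$ acts semisimply on $M$ and $v_0$ acts semisimply on $M$, then since $v_0$ and $L(0)$ commute (both preserve the grading appropriately, or rather $[L(0), v_0] = 0$ because $v$ has weight one) they are simultaneously diagonalizable, so $\widetilde{L}(0) = L(0) + v_0 + \text{const}$ is semisimple and $\widetilde{M}$ is not logarithmic. Conversely, if $v_0$ is not semisimple on $M$, write $v_0 = v_0^{s} + v_0^{n}$ for its Jordan decomposition with $v_0^n \neq 0$ nilpotent; then $\widetilde{L}(0) = (L(0) + v_0^s + \text{const}) + v_0^n$ is the Jordan decomposition of $\widetilde{L}(0)$ (the first part semisimple, commuting with the nilpotent $v_0^n$), and since $v_0^n \neq 0$ the operator $\widetilde{L}(0)$ is not semisimple, i.e., $\widetilde{M}$ is logarithmic.

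There is one subtlety I would address explicitly: one must verify that the generalized eigenspaces of $\widetilde{L}(0)$ are finite-dimensional and that $\widetilde{M}$ is genuinely $\mathbb{N}$-graded (or at least grading-restricted) so that the word ``logarithmic'' applies in the intended sense — this follows because $M$ is a $V$-module with finite-dimensional $L(0)$-eigenspaces and $v_0$ preserves each such eigenspace (being weight zero), so on each eigenspace $\widetilde{L}(0)$ acts as a finite-dimensional operator. I would also note that the ``only if'' direction uses that the constant shift and $v_0^s$ do not accidentally cancel the non-semisimplicity — they cannot, since adding a semisimple operator commuting with $v_0^n$ never removes the nilpotent part.

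The main obstacle I expect is pinning down the precise formula $\widetilde{L}(0) = L(0) + v_0 + c$ with the correct constant $c$ and verifying it rigorously from the definition \eqref{delta-main}; this requires the conjugation formula for $\Delta(v,x)$ acting on $\omega$ and its interaction with $Y_M(\cdot, x)$, which is exactly the content of Li's $\Delta$-operator machinery \cite{Li}. Once that formula is in hand, the rest is the elementary linear-algebra observation about Jordan decompositions of commuting operators, so the real work is entirely in recalling/citing the correct form of the transformed conformal vector.
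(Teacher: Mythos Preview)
The paper gives no proof of this corollary; it is stated immediately after Theorem~\ref{delta} as an evident consequence, with the underlying computation left to \cite{Li} and \cite{AM5}. Your argument is correct and is precisely the implicit one: from $v_0\omega=0$, $v_1\omega=v$, $v_n\omega=0$ for $n\ge 2$ one gets $\Delta(v,x)\omega=\omega+vx^{-1}+\tfrac{1}{2}\langle v,v\rangle\mathbf{1}\,x^{-2}$, hence $\widetilde{L}(0)=L(0)+v_0+\tfrac{1}{2}\langle v,v\rangle$, and since $[L(0),v_0]=0$ the Jordan decomposition argument finishes it. You should clean up the hedging mid-paragraph and the garbled computation of $v_0\omega$, but the substance is exactly what the authors intend.
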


By using this method  logarithmic $\mathcal{W}(p)_{Q}$-modules (including projective covers) can be constructed by taking $v=e^{-\alpha_i/\sqrt{p}}$ \cite{AM5}. But in general (cf. \cite{AM5}, \cite{NT})  one cannot construct all projective covers simply by taking $v$ to be a primary vector inside the generalized vertex algebra $V_{L^\circ}$. Instead we require
more complicated operators not present in the extended algebra $V_{L^\circ}$ (for a recent application of this circle of ideas see \cite{AM9}).
Then, when combined with $\mathcal{W}(p)_Q$ (and not all of $V_L$!), these more complicated local operators $v^{[i]}(z)$ (here $[i]$ has no particular meaning; it merely indicates some sort of "power" construction) became mutually local with $\mathcal{W}(p)_Q$, which allows us to extend our $\mathcal{W}$-algebra with $v^{[i]}(z)$ by using Li's theory of local systems \cite{LL}. Then we cook up a $\Delta$ operator and consider the residue
$$v^{[i]}_0=Res_{z_0} v^{[i]}(z),$$ which also annihilate $\mathcal{W}(p)_Q$, and again apply Theorem \ref{delta}. 

Already from this discussion we infer
\begin{corollary}
The vertex algebra $\mathcal{W}(p)_Q$ is irrational.
\end{corollary}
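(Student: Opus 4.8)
The plan is to derive irrationality from the mere \emph{existence} of a logarithmic $\mathcal{W}(p)_Q$-module, combined with the standard fact that a rational vertex operator algebra admits none. The construction of such a module is exactly the one recalled in the discussion preceding this corollary, so most of the work has already been done; what remains is to package it.

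In detail, I would invoke Theorem~\ref{delta} and its Corollary with the weight-one primary vector $v = e^{-\alpha_i/\sqrt{p}}$ of the generalized lattice vertex algebra $V_{L^{\circ}}$ and with $\overline{V} = \mathcal{W}(p)_Q$; this is legitimate since, by the definition (\ref{short}), $\mathcal{W}(p)_Q \subseteq {\rm Ker}\, e^{-\alpha_i/\sqrt{p}}_0$ is a vertex subalgebra contained in ${\rm Ker}_V v_0$. Next, following \cite{AM5}, choose an ordinary module $M$ over the ambient algebra --- a suitable direct sum of lattice cosets $V_\gamma$ stable under $v_0 = e^{-\alpha_i/\sqrt{p}}_0$ --- on which $L(0)$ acts semisimply (automatic, as the lattice vertex algebra and its ordinary modules are rational) but on which the screening $v_0$ does \emph{not} act semisimply. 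By the Corollary to Theorem~\ref{delta}, the twisted module $\widetilde{M}$ is then a logarithmic $\mathcal{W}(p)_Q$-module: $L(0)$ has a nonzero nilpotent part on it. After shifting conformal weights within each congruence class mod $\mathbb{Z}$, $\widetilde{M}$ (or a finite direct sum of its pieces) is a genuine $\N$-graded $\mathcal{W}(p)_Q$-module which is not $L(0)$-diagonalizable.

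Finally I would argue: if $\mathcal{W}(p)_Q$ were rational, then every $\N$-graded module would be completely reducible, hence a direct sum of irreducible $\N$-graded modules; on each such irreducible module $L(0)$ acts semisimply (indeed as a scalar on each graded component), and semisimplicity of $L(0)$ passes to arbitrary direct sums. Thus $L(0)$ would be diagonalizable on every $\N$-graded module, contradicting the previous paragraph; hence $\mathcal{W}(p)_Q$ is irrational. The one nonformal point --- and the place where the lattice realization is genuinely used --- is producing the auxiliary module $M$ on which $e^{-\alpha_i/\sqrt{p}}_0$ fails to be semisimple (a mere cyclic permutation of cosets would, for instance, give a \emph{semisimple} $v_0$ and be useless, so the choice must be made with care, as in \cite{AM5}); once such an $M$ is available, the corollary is a formal consequence of Li's deformation theory \cite{Li} and the definition of rationality.
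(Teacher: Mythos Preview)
Your proposal is correct and follows exactly the paper's approach: the paper simply remarks that the preceding construction (Theorem~\ref{delta} and its Corollary applied with $v=e^{-\alpha_i/\sqrt{p}}$, as in \cite{AM5}) yields logarithmic $\mathcal{W}(p)_Q$-modules, and that ``a single screening $e^{-\alpha_i/\sqrt{p}}_0$'' suffices for this. You have spelled out in more detail the standard implication ``logarithmic module $\Rightarrow$ not rational'' and flagged the one genuine input (the choice of an $M$ on which $v_0$ is not semisimple), but the argument is the same.
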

The previous result requires  a single screening $e^{-\alpha_i/\sqrt{p}}_0$.

\section{Some logarithmic modules for $\mathcal{W}(p)_Q$ }
In this section we shall describe a family of such logarithmic representations for $\mathcal{W}(p)_Q$  based on  the second power of screening operators.
 We present a new locality result which enables us to use concepts developed in \cite{AM5} and described above.
To exemplify the construction we only consider $Q=A_1$, and focus on $\mathcal{W}(p)$, but everything in this section applies to $\alpha$ replaced by $ \sqrt{p }\alpha_i$.
Here $p >1$. Define the following lattices
$$ L= {\Z} \a, \quad \widetilde{L} = {\Z} \tfrac{\a}{p}, $$
where $\langle \a, \a  \ra = 2 p. $

Then $V_{\widetilde{L} }$ has the structure of a generalized vertex operator algebra, and its subalgebra $V_L$ is a vertex operator algebra with the Virasoro vector
$$ \omega = \frac{1}{4p } \a (-1) ^2 + \frac{p-1}{2p} \a(-2).$$

 Let $a = e ^{-\a /p}$.
In the generalized vertex algebra $V_{ \widetilde{L}}$ the following locality relation holds:
$$ (z_1-z_2) ^{-2/p} Y(a, z_1) Y(a,z_2) - (z_2 - z_1) ^{-2/p}   Y(a,z_2) Y(a,z_1) = 0. $$

Define

\bea
&& \phi(t) = p t^{1/p}  \ {_2F_1} \left(\begin{array}{c} {1}/{p}, {2}/{p} \\ 1+ {1}/{p} \end{array}; t \right) =\sum_{j=0} ^{\infty} \frac{(-1) ^j }{j +1/p} t ^{j+ 1/p} { -2/p \choose j}, \nonumber \\
&& G(z) = \mbox{Res}_{z_1}\left( \phi(z/z_1) Y(a,z_1) Y(a,z) +  \phi(z_1/z)  Y(a,z) Y(a_1,z_1) \right) . \nonumber \eea
Let $G(z) = \sum_{n \in \Z} G(n) z ^{-n-1}$. Then, for  $n \in {\Z}$, we get

$$ G(n) =   \sum_{ j \ge 0} \frac{(-1) ^j }{ 1/p + j }{ -2/p \choose j} \left( a_{-1/p - j
+ n } a_{1/p +j} + a_{-1/p -j} a_{1/p  +j + n} \right), $$

$$ G(0) = 2 \sum_{ j \ge 0} \frac{(-1) ^j }{ 1/p + j }{ -2/p \choose j} a_{-1/p - j
 } a_{1/p +j}  . $$

We infer the following result:
 \begin{proposition} \label{tv-1} We have
\begin{itemize}
 \item[(1)]
$$[ L(n), G(m) ] = - m G(n+m). \qquad (m,n \in \Z).$$
In particular, $G(0)$ is a screening operator.

\item[(2)] The fields $G(z)$  and $L(z)$ are mutually local. More
precisely:

$$(z_1 -z_2) ^3 [L(z_1), G(z_2)] = 0. $$
We also have:

$$ L(z)_0 G(z) = G'(z), \quad  L(z)_1 G(z) = G(z), \quad L(z) _n G(z) = 0 \quad \mbox{for} \ n \ge 2. $$

\item[(3)] Let $\widetilde{L(n)} = L(n) + G(n)$. Then operators
$\widetilde{L(n)}$ define  on $$M^{+}_2=V_{L + \frac{p+1}{2p} \a} \oplus
V_{L + \frac{p-3}{2p} \a},  \quad  M^{-}_2=V_{L + \frac{1}{2p} \a} \oplus
V_{L + \frac{-3}{2p} \a} $$ the structure of the module for the
Virasoro algebra.

\end{itemize}
 \end{proposition}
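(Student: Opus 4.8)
The plan is to establish the three parts in order, since each one feeds into the next. For part (1), I would start from the definition $G(z)=\mathrm{Res}_{z_1}\left(\phi(z/z_1)Y(a,z_1)Y(a,z)+\phi(z_1/z)Y(a,z)Y(a,z_1)\right)$ and use the standard Virasoro commutation relations with the vertex operators $Y(a,z_1)$. Recall that $a=e^{-\alpha/p}$ is a vector of conformal weight $\frac{1}{p^2}\cdot\frac{\langle\alpha,\alpha\rangle}{2}=\frac{1}{p}$ (after the shift by $\omega$), so $[L(n),Y(a,w)]=w^{n}\left(w\partial_w+(n+1)\tfrac{1}{p}\right)Y(a,w)$. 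Feeding this into the residue defining $G(z)$ and using that $\phi$ satisfies the hypergeometric ODE (equivalently, the contiguous relations encoded in the generating series $\phi(t)=\sum_j\frac{(-1)^j}{j+1/p}\binom{-2/p}{j}t^{j+1/p}$), the weight-$\frac1p$ contributions from the two factors of $Y(a,\cdot)$ combine with the $t\partial_t$ acting on $\phi$ to collapse to the clean relation $[L(n),G(m)]=-mG(n+m)$; in particular $[L(n),G(0)]=0$, so $G(0)$ preserves $\mathrm{Ker}\,L(n)$ for all $n$ and is a screening in the sense used above. I expect the bookkeeping here — tracking which powers of $z$ and $z_1$ survive the double residue — to be the most delicate routine part, but it is a finite computation driven entirely by the ODE for $\phi$.

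For part (2), the mutual locality $(z_1-z_2)^3[L(z_1),G(z_2)]=0$ should follow from part (1) by repackaging the mode relation into generating-function form: $[L(n),G(m)]=-mG(n+m)$ is exactly the statement that $G(z)$ is a weight-one primary-type field for the Virasoro field $L(z)$ up to the absence of a central term, which gives $[L(z_1),G(z_2)]=G(z_2)\partial_{z_2}\delta(z_1/z_2)z_2^{-1}+\bigl(\partial_{z_2}G(z_2)\bigr)\delta(z_1/z_2)z_2^{-1}$ — wait, more precisely the OPE has a pole of order two, so $(z_1-z_2)^3$ kills it with room to spare, yielding the locality bound $3$ (not the optimal $2$) that is all we need. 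The displayed identities $L(z)_0G(z)=G'(z)$, $L(z)_1G(z)=G(z)$, $L(z)_nG(z)=0$ for $n\ge2$ are just the translation of the same commutator into the "$n$-th product of fields" notation of Li's local-systems formalism \cite{LL}, i.e. the nonnegative Fourier modes of the OPE; these are immediate once part (1) is in hand.

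For part (3), the idea is that $\widetilde{L(n)}=L(n)+G(n)$ again satisfies Virasoro relations because $G(n)$ contributes no new central term and interacts with $L(n)$ exactly as a weight-one field: $[\widetilde{L(m)},\widetilde{L(n)}]=[L(m),L(n)]+[L(m),G(n)]+[G(m),L(n)]+[G(m),G(n)]$, and using part (1) twice the cross terms give $(n-m)G(m+n)+$ (the $[G(m),G(n)]$ term), so provided $[G(m),G(n)]$ contributes the appropriate multiple of $G(m+n)$ with no central extension — which is where the locality relation $(z_1-z_2)^{-2/p}Y(a,z_1)Y(a,z_2)=(z_2-z_1)^{-2/p}Y(a,z_2)Y(a,z_1)$ for the pair of $a$'s is used to control $[G(m),G(n)]$ — one gets $[\widetilde{L(m)},\widetilde{L(n)}]=(m-n)\widetilde{L(m+n)}$ with the same central charge. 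The point of introducing the specific modules $M^{\pm}_2$ is that on the direct sums $V_{L+\frac{p+1}{2p}\alpha}\oplus V_{L+\frac{p-3}{2p}\alpha}$ and $V_{L+\frac{1}{2p}\alpha}\oplus V_{L+\frac{-3}{2p}\alpha}$ the operator $G(n)$ actually maps between the two summands (the mode $a_{1/p+j}$ shifts the lattice coset by $-\alpha/p$, and the two cosets in each pair differ exactly by $\alpha/p$), so $G(n)$ is a well-defined operator there and the action closes — on a single Fock space $G(n)$ would not even be defined. The main obstacle, and the genuinely new content of the proposition, is verifying that these specific $L$-cosets are precisely the ones on which $G(z)$ acts (i.e. the convergence and integrality of the exponents $\langle\alpha/p,\beta\rangle$ occurring in $a_{1/p+j}$ when $\beta$ ranges over the relevant cosets) and that the resulting operators $\widetilde{L(n)}$ have no obstruction to being a Virasoro action — essentially checking that the fractional powers bookkeeping in the locality relation for $a$ against $a$ is consistent on $M^{\pm}_2$. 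Once that is pinned down, parts (1)–(2) plus the locality relation deliver the Virasoro relations formally.
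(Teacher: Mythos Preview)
The paper does not actually supply a proof of this proposition --- it is stated and immediately followed by a remark pointing to the contour-integral methods of \cite{NT}, \cite{TK} as an alternative route. So there is no written argument to compare against; what follows addresses where your outline goes wrong on its own terms.

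The main error is in part (1): the conformal weight of $a = e^{-\alpha/p}$ is $1$, not $1/p$. Under $\omega = \tfrac{1}{4p}\alpha(-1)^2 + \tfrac{p-1}{2p}\alpha(-2)$ one computes $L(0)e^{-\alpha/p} = \bigl(\tfrac{1}{p} + \tfrac{p-1}{p}\bigr)e^{-\alpha/p} = e^{-\alpha/p}$; this is exactly what makes $e^{-\alpha/p}_0$ a screening operator in the first place. With the correct weight, $[L(n),Y(a,w)] = \partial_w\bigl(w^{n+1}Y(a,w)\bigr)$ is a total $w$-derivative, and the computation of $[L(n),G(z)]$ proceeds by integration by parts in $\mathrm{Res}_{z_1}$ together with the explicit formula $\phi'(t) = t^{1/p-1}(1-t)^{-2/p}$, not by a cancellation of stray $(n+1)/p$ terms against $t\partial_t\phi$. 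Your mechanism as written does not match what actually happens, and with $h=1/p$ the bookkeeping you describe would not close up to the relation $[L(n),G(m)]=-mG(n+m)$.

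In part (3) you miss the essential point. You expect $[G(m),G(n)]$ to contribute ``the appropriate multiple of $G(m+n)$''; in fact what is needed, and what holds, is $G(m)G(n) = 0$ on $M_2^{\pm}$. The operator $G(z)$ involves \emph{two} factors of $a = e^{-\alpha/p}$ and hence shifts the lattice coset by $-\tfrac{2}{p}\alpha$, not $-\tfrac{1}{p}\alpha$ as you say; correspondingly the two summands in each $M_2^{\pm}$ differ by $\tfrac{2}{p}\alpha$ (e.g.\ $\tfrac{p+1}{2p}-\tfrac{p-3}{2p}=\tfrac{2}{p}$), not $\tfrac{1}{p}\alpha$. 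Thus $G$ sends one summand to the other and $G^2$ lands outside the module, so $G(m)G(n)=0$ there. With this, $[\widetilde{L(m)},\widetilde{L(n)}] = [L(m),L(n)] + (m-n)G(m+n)$ follows directly from part (1), and the Virasoro relations are immediate; the generalized locality of $Y(a,z_1)$ against $Y(a,z_2)$ plays no role at this step. Your part (2) is fine once (1) is corrected.
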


\begin{remark}
 One can also represent $G(z)$ by using contour integrals as in \cite{NT}, and give a different proof of  Proposition \ref{tv-1} by using methods  developed in \cite{TK}. One defines
$$Q^{[2]}(z)=\int_{\gamma} dt Y(a,z)Y(a,tz)z $$
where
$\gamma$ is a certain contour. For $p \geq 2$, we can show that
$$ \frac{\Gamma(2/p+1)\Gamma(-1/p)}{\Gamma(1/p+1)} G(z)=Q^{[2]}(z),$$
where $\Gamma(z)$ is the usual $\Gamma$-function.
\end{remark}

\noindent Let $\widehat{a} = e ^{\a - \a/p}$. Define

\bea
 \overline{G}(z) &=& \sum_{n \in \Z} \overline{G}(n) z ^{-n-1} \nonumber \\ &=& \mbox{Res}_{z_1}\left( \phi(z/z_1) Y(a,z_1) Y(\widehat{a} ,z) +  \phi(z_1/z)  Y(\widehat{a},z) Y(a,z_1) \right) . \nonumber  \eea

\noindent Then
$$ \overline{G}(n)=   \sum_{ j \ge 0} \frac{(-1) ^j }{ 1/p + j }{ -2/p \choose j} \left( \widehat{a}_{-1/p - j
+n  } a_{1/p +j} + a_{-1/p -j} \widehat{a}_{1/p  +j +n} \right). $$
Let
$$  \mu = \frac{p}{p-1}. $$

First we need the following result.

\begin{lemma} \label{com-2}  We have the following relations:
\begin{itemize}
\item[(i)]  $[ e ^{\a}_0 , G(n)] =- n \mu   \overline{G}(n-1)$; i.e.,  $ [e ^{\a}_0, G(z)] = \mu \overline{G} ' (z)$;

\item[(ii)]   $ [ e^{\a}_0, G(0)] = 0$,
\item[(iii)] $[ L(n), \overline{G}(m) ] = -(n+m+1) \overline{G}(n+m) $.
\end{itemize}
\end{lemma}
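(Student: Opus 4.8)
The plan is to prove all three relations by the same mechanism: each is a commutator of a mode of one vertex-operator-type field with a mode of a ``screened'' field built as a residue of a product $\phi(\cdot)Y(a,z_1)Y(\cdot,z)$, and all such commutators are governed by the locality of $a=e^{-\a/p}$ (and of $\widehat a=e^{\a-\a/p}$) with the relevant operator. First I would record the operator product data needed: $Y(\omega,z_1)Y(a,z_2)$ has a pole of order $2$ with leading coefficient $\mathrm{wt}(a)\cdot a$ and subleading coefficient $\partial a$ (because $a$ is primary of conformal weight $\mathrm{wt}(a)=\langle\a/p,\a/p\rangle/2\cdot\frac{?}{}$; in fact here $\langle-\a/p,-\a/p\rangle/2=1/p$, but the relevant product that enters $G$ is between two $a$'s), and similarly $Y(e^\a,z_1)Y(a,z_2)\sim (z_1-z_2)^{\langle\a,-\a/p\rangle}(\cdots)=(z_1-z_2)^{-2}(\cdots)$, whose expansion produces $\widehat a=e^{\a-\a/p}$ as the normally ordered product $e^\a_{\langle\a,-\a/p\rangle-1+k}\,a$ up to scalars. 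The scalar $\mu=p/(p-1)$ in (i) will come out of exactly this OPE together with the combinatorial factor $1/(1/p+j)\binom{-2/p}{j}$ in the definition of $G$.

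For part (iii), $[L(n),\overline G(m)]$, I would argue that $\overline G(z)$ transforms as a primary field of conformal weight $2$ under the Virasoro action --- this is the content of ``$-(n+m+1)$'', i.e. $[L(n),\overline G(m)]=((2-1)n-m)\overline G(n+m)=(n-m)\overline G(n+m)$ \emph{unless} my sign/weight bookkeeping differs; reconciling $-(n+m+1)$ with the primary-field formula $[L(n),\phi(m)]=((h-1)n-m)\phi(n+m)$ forces $h=0$, so in fact the claim is that $\overline G(z)$ has conformal weight $0$ (equivalently $\overline G(z)=\sum \overline G(n)z^{-n-1}$ with $\overline G(z)$ a weight-$0$ field, $\overline G(n)$ lowering weight by $n$ — note $\widehat a=e^{\a-\a/p}$ has conformal weight $\langle\a-\a/p,\a-\a/p\rangle/2+\frac{p-1}{2p}\langle\a-\a/p,\a\rangle$ which should be arranged by the choice of $\omega$ to make the pairing work). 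Concretely I would compute $[L(n),\overline G(m)]$ directly from the mode formula for $\overline G(n)$ in the excerpt by commuting $L(n)$ past each $\widehat a_{\ast}a_{\ast}$ and $a_\ast\widehat a_\ast$ pair using $[L(n),e^\beta_k]=\big((\mathrm{wt}(e^\beta)-1)n-k+\,?\big)e^\beta_{n+k}$ for the relevant $\beta$, collect the shifted indices, and observe that the $j$-dependent coefficients telescope exactly as they do for $G$ in Proposition \ref{tv-1}(1), changing only the weight offset; this is bookkeeping, not conceptually hard.

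Parts (i) and (ii) are the substance. For (i), I would start from $[e^\a_0,G(z)]=\mathrm{Res}_{z_1}\big(\phi(z/z_1)[e^\a_0,Y(a,z_1)Y(a,z)]+\phi(z_1/z)[e^\a_0,Y(a,z)Y(a,z_1)]\big)$, distribute the commutator over the product via $[e^\a_0,Y(a,w)]=Y(e^\a_0 a,w)$ (valid since $e^\a_0$ is a derivation of the generalized vertex algebra; here $e^\a_0 a$ need not vanish — $\langle\a,-\a/p\rangle=-2\notin\Z_{\ge0}$ means the product $Y(e^\a,w)a$ has a genuine pole and $e^\a_0 a$ is the coefficient of $w^{-1}$, which up to a nonzero scalar is $\widehat a=e^{\a-\a/p}$ shifted, i.e. $e^\a_0 a = c\,\partial\widehat a$ or $c\,\widehat a_{-1}$ for an explicit $c$), and then recognize the resulting residue as exactly $\overline G(z)$ up to the scalar $\mu$ and a derivative. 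The key move is to match $\mathrm{Res}_{z_1}\phi(z/z_1)Y((e^\a_0 a),z_1)Y(a,z)$ against $\mathrm{Res}_{z_1}\phi(z/z_1)Y(\widehat a,z)Y(a,z_1)$ — i.e. to see that replacing $a(z_1)$ by $(e^\a_0 a)(z_1)$ in one factor is, after the $\phi$-weighting and the residue, the same as replacing the \emph{other} factor $a(z)$ by $\widehat a(z)$; this is where I expect the main obstacle, because it requires carefully tracking how the branch-cut weighting $\phi$ (with its $t^{1/p}$ and hypergeometric tail) interacts with the integer shift in the mode index coming from $e^\a_0$, and showing the extra index shift is absorbed into the $\overline G(z)$ already defined (hence the $n-1$ versus $n$ in $[e^\a_0,G(n)]=-n\mu\overline G(n-1)$) while the leftover piece assembles into a total derivative $\mu\overline G\,'(z)$. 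Part (ii) is then the corollary obtained by taking the $z^{-1}$-coefficient ($n=0$): the factor $-n$ kills everything, so $[e^\a_0,G(0)]=0$ — I would double-check that no anomalous term survives the residue at $n=0$, which should follow from the $m\,G(n+m)$-type structure already established in the analogous formula $[L(n),G(m)]=-mG(n+m)$. Throughout I would lean on Proposition \ref{tv-1} and on Li's $\Delta$-operator formalism only implicitly; the actual engine is OPE/residue manipulation in the generalized vertex algebra $V_{\widetilde L}$.
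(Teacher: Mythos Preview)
Your overall architecture for (i)--(ii) is right --- distribute $e^{\a}_0$ through the residue defining $G(z)$ using the derivation property, identify $e^{\a}_0 a$, and recognize the result --- and (ii) does indeed drop out of (i) by setting $n=0$. But the step you flag as ``the main obstacle'' is exactly where your plan diverges from the argument that actually works, and your proposed mechanism (swap which factor carries $\widehat a$) is not the right one.

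The missing ingredient is that you must \emph{commit} to the identity $e^{\a}_0\,a=\mu\,D\widehat a$ (your ``$c\,\partial\widehat a$'' option; compute it directly from $Y(e^{\a},w)e^{-\a/p}=w^{-2}(1+\a(-1)w+\cdots)e^{\a-\a/p}$ and $D\widehat a=\tfrac{p-1}{p}\a(-1)\widehat a$). The derivative $D$ is the whole point: it gives $[e^{\a}_0,Y(a,w)]=\mu\,\partial_w Y(\widehat a,w)$, so after distributing you get four terms, two carrying $\partial_{z_1}$ and two carrying $\partial_z$. The $\partial_z$ pair is, up to terms where $\partial_z$ hits $\phi$, exactly $\mu\,\partial_z\overline G(z)$. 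For the $\partial_{z_1}$ pair you integrate by parts in $z_1$; the derivative lands on $\phi$, and since $\phi'(t)=t^{1/p-1}(1-t)^{-2/p}$ one obtains (and similarly for the $\partial_z\phi$ correction)
\[
\mbox{Res}_{z_1}\,z^{1/p}z_1^{1/p-1}\Big((z_1-z)^{-2/p}Y(\widehat a,z_1)Y(a,z)-(z-z_1)^{-2/p}Y(a,z)Y(\widehat a,z_1)\Big).
\]
This bracket vanishes by the \emph{generalized} locality of $a$ and $\widehat a$ in $V_{\widetilde L}$ (the pairing $\langle -\a/p,\,\a-\a/p\rangle=-2+2/p$ makes the $(z_1-z)^{-2/p}$ prefactors exactly cancel the fractional part of the OPE exponent, so the two products agree as honest Laurent expansions). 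The other correction term vanishes the same way, leaving $\mu\,\overline G'(z)$. No ``swap of position'' argument is needed, and it is not clear your version could be made to work without in effect rediscovering this integration-by-parts-plus-locality step.

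For (iii) your plan --- compute the conformal weight of $\widehat a$ under the shifted $\omega$ and run the same residue/mode bookkeeping as in Proposition~\ref{tv-1}(1) --- is the right one; the weight indeed comes out to $0$, which is what the coefficient $-(n+m+1)$ encodes. Your hesitation there is just arithmetic, not a conceptual gap.
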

\begin{proof}
Let us prove relation (i). First we notice that
$$ e ^{\a}_0 a = \mu D \widehat{a}. $$
Then we have

\bea
[e ^{\a}_0, G(z)] &= & \mu  \mbox{Res}_{z_1}  ( \phi(z/z_1)  \partial_{z_1}  Y(\widehat{a},z_1)  Y(a,z)
  +  \phi(z_1/z)  Y(a,z)   \partial_{z_1}  Y(\widehat{a},z_1)   )  \nonumber \\   &&+\mu \mbox{Res}_{z_1}  ( \phi(z/z_1)  Y(a,z_1)  \partial_z  Y(\widehat{a},z)
 + \phi(z_1/z)  \partial_z  Y(\widehat{a},z)    Y(a,z_1) )   \nonumber \\
 & = & -\mu \mbox{Res}_{z_1}   z ^{1/p} z_1 ^ {1/p -1} \nonumber \\  && \left( (z_1-z) ^{-2/p}  Y(\widehat{a},z_1) Y(a,z) - (z-z_1) ^{-2/p} Y(a,z) Y(\widehat{a},z_1) \right)  \nonumber \\
 & & + \mu \mbox{Res}_{z}  z ^{1/p-1} z_1 ^ {1/p} \nonumber \\ & & \left( (z_1-z) ^{-2/p} Y(a,z_1) Y(\widehat{a},z) - (z-z_1) ^{-2/p}   Y(\widehat{a},z) Y(a,z_1) \right) \nonumber \\
 & & + \mu \mbox{Res}_z ( \overline{G}(z) ) \nonumber \\
 & = & \mu \mbox{Res}_z ( \overline{G}(z) ) . \nonumber
\eea
This proves relation (i). The relation (ii) follows from (i). The proof of (iii) is similar to that of (ii).
\end{proof}

Recall that the triplet vertex algebra $\mathcal{W}(p)$ is realized as a subalgebra of $V_L$ generated by the vectors
$$ \omega, \ F = e ^{-\a}, \ H = {\Q} F, \ E = {\Q} ^2 F, $$
where $Q =e ^{\a}_0$.

The doublet vertex algebra $\mathcal{A}(p)$ is the subalgebra of $V_{ \widetilde{L} }$ generated by
$$ x ^- = e ^{-\a/2}, \quad x^+ = {\Q} a ^{-\a /2}. $$

Clearly, $\mathcal{W} (p)$ is a subalgebra of $\mathcal{A}(p)$.
\begin{proposition}We have
\begin{itemize}
\item[(i)] The fields
$ G(z), \overline{G}(z),  Y(x^-,z), Y(x ^+ ,z)$ are mutually local.
\item[(ii)]
\bea
U &=& \mbox{span}_{\C} \{ G(z);   Y(v,z) \ \vert \ v \in \mathcal{W}(p)  \}, \nonumber \\
U^{e} &=& \mbox{span}_{\C} \{ G(z), \overline{G}(z);   Y(v,z) \ \vert \ v \in \mathcal{W}(p)  \}  \nonumber
\eea
are local subspaces of fields acting on $M_2 ^{\pm}$.
\end{itemize}
\end{proposition}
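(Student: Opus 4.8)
The plan is to reduce everything to a single source: the elementary locality relation for the field $Y(a,z)$ with itself that was stated just before Proposition \ref{tv-1}, namely $(z_1-z_2)^{-2/p}Y(a,z_1)Y(a,z_2)-(z_2-z_1)^{-2/p}Y(a,z_2)Y(a,z_1)=0$ inside $V_{\widetilde L}$, together with the analogous fractional-power locality relations among $Y(a,z)$, $Y(\widehat a,z)$, $Y(x^-,z)=Y(e^{-\a/2},z)$ and $Y(x^+,z)$, all of which follow from the general commutation formula for vertex operators on a generalized (rational) lattice vertex algebra attached to $\widetilde L$. Concretely, for vectors $e^{\beta_1},e^{\beta_2}\in V_{\widetilde L}$ one has mutual locality of order governed by $\langle\beta_1,\beta_2\rangle$, i.e. $(z_1-z_2)^{N}[Y(e^{\beta_1},z_1),Y(e^{\beta_2},z_2)]=0$ for $N$ large enough once one multiplies through by the appropriate fractional power $(z_1-z_2)^{\langle\beta_1,\beta_2\rangle}$; these are standard facts from \cite{DL}, \cite{LL}. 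The point is that $G(z)$ and $\overline{G}(z)$ are, by their defining formulas, \emph{residues in $z_1$} of integral-kernel combinations $\phi(z/z_1)Y(a,z_1)Y(\cdot,z)+\phi(z_1/z)Y(\cdot,z)Y(a,z_1)$, so their locality with any other field $Y(w,z)$ is inherited from the (three-point) locality of the three fields $Y(a,z_1)$, $Y(\cdot,z)$ and $Y(w,z)$ after taking $\mathrm{Res}_{z_1}$.

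For part (i) I would argue in the following order. First, $Y(x^-,z)$ and $Y(x^+,z)$ are mutually local with each other and with $Y(v,z)$ for $v\in\mathcal{W}(p)$: this is immediate because $\mathcal{W}(p)\subseteq\mathcal{A}(p)\subseteq V_{\widetilde L}$ is a (generalized) vertex subalgebra, so any two of its fields are mutually local, and the relevant inner products $\langle-\a/2,-\a/2\rangle=p/2$, etc., are such that after the fractional prefactor the commutator is annihilated by a (large enough) integral power of $(z_1-z_2)$. Second, $G(z)$ is mutually local with $L(z)$ — this is already Proposition \ref{tv-1}(2) — and, by the same residue argument applied with $Y(a,z_1)$, $Y(a,z)$ and $Y(w,z)$ in place of $Y(a,z_1),Y(a,z),L(z_1)$, with $w$ any of $x^\pm$ or a generator $E,F,H$ of $\mathcal{W}(p)$, one gets locality of $G(z)$ with $Y(w,z)$; the key input is that $Y(a,z_1)Y(w,z)$ has only a \emph{finite} fractional-order singularity, so after extracting $\phi(z/z_1)$ (which is $z_1^{-1/p}$ times a power series in $z/z_1$) and the mirror term, the $\mathrm{Res}_{z_1}$ produces a field whose bracket with $Y(w,z)$ is killed by a fixed power of $(z_1-z_2)$. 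Third, the same scheme gives locality of $\overline G(z)$, now with the kernel built from $Y(a,z_1)$ and $Y(\widehat a,z)$; here one uses $\langle\a-\a/p,\ \cdot\ \rangle$ to bound the singular orders, and Lemma \ref{com-2}(iii) already records the Virasoro side. Finally, mutual locality of $G(z)$ with $\overline G(z)$ is handled by viewing both as iterated residues of a four-point function in $Y(a,z_1),Y(a,z_2),Y(\widehat a,z_3),Y(\cdot)$ and invoking associativity/locality of these four genuine lattice vertex operators. Part (ii) is then a formal consequence: a finite span of pairwise mutually local fields on a fixed space (here $M_2^\pm$) is a local subspace in the sense of Li's local-systems formalism \cite{LL}, so $U$ and $U^e$ are local subspaces once we check closure of the listed generators under mutual locality, which is exactly part (i) (note $\{Y(v,z):v\in\mathcal{W}(p)\}$ is already a local system because $\mathcal W(p)$ is a vertex algebra, and adjoining finitely many fields each local with everything in it keeps it a local system).

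The main obstacle is the bookkeeping of \emph{fractional} powers: because $a=e^{-\a/p}$ and $\widehat a=e^{\a-\a/p}$ live in the generalized algebra $V_{\widetilde L}$ and not in $V_L$, the operator products $Y(a,z_1)Y(a,z)$, $Y(a,z_1)Y(\widehat a,z)$, and $Y(a,z_1)Y(w,z)$ carry prefactors like $(z_1-z)^{-2/p}$, $(z_1-z)^{2/p-2}$, and $(z_1-z)^{\pm 1/p}$ respectively, and one must verify that after multiplying by $\phi(z/z_1)\sim z_1^{-1/p}(\cdots)$ and taking $\mathrm{Res}_{z_1}$ \emph{all} fractional powers cancel, leaving a field that is genuinely mutually local (integral order) with the target. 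This is precisely the computation encapsulated in the proof of Lemma \ref{com-2}(i) — where the two fractional "boundary" terms cancel and one is left with $\mathrm{Res}_z\overline G(z)$ — and the real work is to run the analogous cancellation uniformly for every pair in the list, i.e. to check that the hypergeometric normalization in $\phi$ is exactly the one making $G(z)$ and $\overline G(z)$ into \emph{honest local fields} rather than multivalued objects. Once that fractional-exponent cancellation is established in each case, the locality bounds and hence parts (i) and (ii) follow by the standard lattice-VOA estimates.
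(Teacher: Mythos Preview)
Your residue-inheritance heuristic --- that locality of $G(z)$ with a field $Y(w,z)$ follows from three-point locality of $Y(a,z_1),Y(a,z),Y(w,z)$ after taking $\mathrm{Res}_{z_1}$ against the kernel $\phi$ --- is not sufficient, and the paper itself warns against it. The Remark immediately following the proposition states that $G(z)$ is \emph{not} local with the Heisenberg field $\alpha(z)$, even though $\alpha(-1)\mathbf{1}\in V_L$ and all the relevant inner products with $-\a/p$ are integral. So the ``fractional-exponent cancellation'' you flag as bookkeeping does not go through uniformly: when you commute $Y(w,z_2)$ past the $Y(a,z_1)$ inside the residue, the $\delta$-function at $z_1=z_2$ leaves behind $\phi(z/z_2)$, which carries a genuine $z_2^{-1/p}$ factor, and for a generic $w$ nothing kills it. What makes $x^+$ special is not a lattice estimate but the relation $x^+=\mathcal{Q}\,x^-$ with $\mathcal{Q}=e^{\a}_0$, and this is exactly what your argument never invokes.

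The paper's proof is short precisely because it exploits that relation. One writes the Jacobi identity
\[
[Y(x^+,z_1),G(z_2)]=[\mathcal{Q},[Y(x^-,z_1),G(z_2)]]-[Y(x^-,z_1),[\mathcal{Q},G(z_2)]],
\]
notes that the first bracket is killed by a power of $(z_1-z_2)$ because $Y(x^-,z_1)$ and $G(z_2)$ are already local (here $\langle -\a/2,-\a/p\rangle=1$, so no pole with $Y(a,\cdot)$), and then uses Lemma~\ref{com-2}(i), namely $[\mathcal{Q},G(z)]=\mu\,\overline{G}'(z)$, to reduce the second bracket to locality of $Y(x^-,z_1)$ with $\overline{G}(z_2)$, which in turn follows from ordinary locality of $Y(x^-,z_1)$ with $Y(\widehat a,z_2)=Y(e^{\a-\a/p},z_2)$ in the generalized lattice algebra. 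Part (ii) is then, as you say, Li's local-systems formalism. So the missing idea in your proposal is the Jacobi reduction via $\mathcal{Q}$ and Lemma~\ref{com-2}(i); without it the direct residue computation does not close.
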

\begin{proof}
It is only non-trivial to prove that $G(z)$ and $Y(x ^+,z)$ are local.
We have
 \bea
&& [ Y(x^+,z_1), G(z_2) ]  \nonumber \\
 && =[{\Q}, [Y(x^-,z_1), G(z_2)]] - [Y(x ^-,z_1), [{\Q},G(z_2)] ] = - [Y(x ^-,z_1), [{\Q},G(z_2)] ].
 \eea
 The proof easily follows if we invoke Lemma \ref{com-2} and  the fact that the fields
 $Y(a ^-,z_1)$ and $ e^{\a - \a/p}(z)$ are local.  This proves (i).
 By using a standard result on locality of vertex operators \cite{Li-local}, \cite{LL}, we invoke that the field $G(z)$, $\overline{G}(z)$  are local with all fields $Y(v, z)$, $v \in \mathcal{A}(p)$. In particular, the sets $U$  and $U ^e$ are local.
 \end{proof}

 \begin{remark}
 We believe that this locality result is new. One can see that $G(z)$ is local only with $\triplet$, but it is not local with all fields $Y(a,z)$, $a \in V_L$. In particular, $G(z)$ is not local with Heisenberg field $\alpha(z)$.
 \end{remark}

 Let $\mathcal{V}$  (resp. $\mathcal{V} ^e$) be the vertex algebra generated by local subspace $U$ (resp. $U ^e$). It is clear that
$$v \mapsto Y(v,z) \ (v \in \mathcal{W}(p) )$$
is a injective homomorphism of vertex algebras. So $\mathcal{W}(p)$ can be considered as a subalgebra of $\mathcal{V}$.

\begin{theorem} We have:
\begin{itemize}
\item[(i)] $ \mathcal{W} (p). G(z) = \mbox{span}_{\C} \{ Y(v,z) _n G(z) \  \ \vert \ v \in  \mathcal{W} (p), \ n \in \Z \} \cong \Pi(p-1). $

\item[(ii)] $\mathcal{V} \cong \mathcal{W}(p) \oplus \Pi(p-1)$.

\item[(iii)] $\mathcal{V}^e = \mathcal{W}(p) \oplus E$, where  $E = \mathcal{W}(p). \overline{G}(z). $

\item[(iv)]  There is a non-split extension

$$ 0 \rightarrow \Pi(p-1) \rightarrow E \rightarrow \Lambda(1) \rightarrow 0. $$
\end{itemize}
\end{theorem}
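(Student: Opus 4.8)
The plan is to analyze the vertex algebra $\mathcal{V}$ (and $\mathcal{V}^e$) generated by the local systems $U$ and $U^e$ by decomposing everything as modules for the two commuting actions available on $M_2^\pm$: the action of $\mathcal{W}(p)$ and the grading by Heisenberg/lattice charge. First I would establish (i) by identifying $\mathcal{W}(p).G(z)$ as a $\mathcal{W}(p)$-module. The key point is that $G(z)$ is built out of the operators $a_n = (e^{-\alpha/p})_n$, so applying $\mathcal{W}(p)$-modes to $G(z)$ produces fields whose "charge" lies in the coset $L + \frac{p-2}{p}\alpha$ (or the relevant shift), and by Proposition~\ref{tv-1}(3) these fields act on $M_2^\pm$; the candidate target is $\Pi(p-1)$, one of the $2p$ irreducible $\mathcal{W}(p)$-modules. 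To pin it down I would compute the lowest conformal weight vector in $\mathcal{W}(p).G(z)$ using the bracket $[L(n),G(m)] = -mG(n+m)$ from Proposition~\ref{tv-1}(1): this tells us $G(z)$ has conformal weight $1$ as a field, so the corresponding state $G(-1)\mathbf{1}$-type vector sits in the right weight, and then compare with the known character/lowest-weight data for $\Pi(p-1)$ from the earlier classification (Theorem in Section 3). Irreducibility of the image follows because $\Pi(p-1)$ is irreducible and the map is nonzero.

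Next, for (ii), I would argue that $\mathcal{V}$, being generated by $\mathcal{W}(p)$ together with $G(z)$, decomposes as $\mathcal{W}(p) \oplus \mathcal{W}(p).G(z)$: these two pieces live in distinct charge sectors (charge $0$ versus the coset determined by $\widehat{a}$ and $a$), so their sum is direct, and it exhausts $\mathcal{V}$ because any iterated product of $G(z)$ with itself or with $\mathcal{W}(p)$-fields either falls back into $\mathcal{W}(p)$ (even number of $G$'s, using the operator product which closes by locality and the $C_2$-cofiniteness of $\mathcal{W}(p)$) or lands in $\mathcal{W}(p).G(z)$ (odd number). Concretely one checks that $G(z)_n G(z)$ for all $n$ lies in $\mathcal{W}(p)$; this is the analogue of the statement for the short screening that $\triplet$ is generated within $V_L$, and should follow from the explicit residue formulas for $G(n)$ together with the fact that the product of two $e^{-\alpha/p}$ vertex operators expands in the $e^{-2\alpha/p}$-sector which, after the $\phi$-weighting integral, is forced into the charge-$0$ part by the locality exponent $-2/p$.

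For (iii), the same bookkeeping applies with $\overline{G}(z)$ adjoined: $\overline{G}(z)$ mixes the sector of $\widehat{a} = e^{\alpha - \alpha/p}$ with that of $a = e^{-\alpha/p}$, and by Lemma~\ref{com-2}(iii) it is a weight-$1$ field (actually the bracket $[L(n),\overline{G}(m)] = -(n+m+1)\overline{G}(n+m)$ shows $\overline{G}(z)$ transforms with a shift, indicating $\overline{G}(z)$ has conformal weight $0$ as a field — this is exactly what makes the extension nonsplit). Then $\mathcal{V}^e = \mathcal{W}(p) \oplus E$ with $E = \mathcal{W}(p).\overline{G}(z)$, again by charge separation and closure. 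Finally, for (iv), the crucial relation is Lemma~\ref{com-2}(i): $[e^\alpha_0, G(z)] = \mu\,\overline{G}'(z)$, which realizes $G(z)$ and $\overline{G}(z)$ in a single $\mathcal{W}(p)$-module $E$ where the screening $\mathcal{Q} = e^\alpha_0$ sends the $\overline{G}$-part onto (a derivative of) the $G$-part. So $E$ contains $\mathcal{W}(p).G(z) \cong \Pi(p-1)$ as a submodule (the image of $\mathcal{Q}$, since $[\mathcal{Q},G(0)]=0$ by Lemma~\ref{com-2}(ii) makes the submodule closed), with quotient generated by the image of $\overline{G}(z)$; identifying that quotient with $\Lambda(1)$ is again a lowest-weight/character comparison, and non-splitness follows because a splitting would force $\overline{G}(z)$ to lie in $\mathrm{Ker}(\mathcal{Q}-\text{stuff})$, contradicting Lemma~\ref{com-2}(i) which says $\mathcal{Q}$ acts non-trivially (and nilpotently) across the extension.

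The main obstacle I anticipate is (ii), specifically proving that $\mathcal{W}(p).G(z)$ is exactly $\Pi(p-1)$ and that $\mathcal{V}$ closes as claimed rather than being something larger. Proving the image is precisely $\Pi(p-1)$ — not a proper submodule and not a larger indecomposable — requires matching the full graded character, which in turn requires knowing that the fields $Y(v,z)_nG(z)$ for $v \in \mathcal{W}(p)$ fill out the entire lattice coset $V_{L + \frac{p-2}{p}\alpha}$ (or whichever coset $\Pi(p-1)$ is realized on); this is a nontrivial surjectivity statement about the screening-type operator, essentially the $C_2$-cofinite analogue of the Feigin--Frenkel resolution, and is where one genuinely uses that $p$ is non-generic in the sense of Theorem~\ref{non-generic}. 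I would handle it by exhibiting enough explicit singular vectors, or alternatively by a dimension count in each graded piece using the known characters of $\mathcal{W}(p)$ and its modules.
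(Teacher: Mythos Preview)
Your overall strategy---use the conformal weight of $G(z)$, then invoke the classification of irreducible $\mathcal{W}(p)$-modules, then read off the extension structure from the action of $\mathcal{Q}=e^{\alpha}_0$---is exactly the paper's line of argument. But you miss the one observation that makes the identification in (i) immediate and dissolves the obstacle you anticipate in the last paragraph.

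The paper does \emph{not} try to match characters or prove that the fields $Y(v,z)_n G(z)$ fill out an entire lattice coset. Instead it notes two things: first, the locality bound $(z_1-z_2)^{2p-1}[Y(x,z_1),G(z_2)]=0$ for $x\in\{E,F,H\}$, together with $[L(n),G(m)]=-mG(n+m)$, shows that $\mathcal{W}(p).G(z)$ is an $\N$-graded $\mathcal{W}(p)$-module whose lowest weight is $1$; second, and this is the step you are missing, the top component is \emph{two}-dimensional, spanned by $G(z)$ and $[\mathcal{Q},G(z)]=\mu\,\overline{G}'(z)$ (this uses Lemma~\ref{com-2}(i), not just the Virasoro bracket). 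In the classification of irreducible $\mathcal{W}(p)$-modules from \cite{AM1} there is exactly one module with two-dimensional top of conformal weight $1$, namely $\Pi(p-1)$, so the cyclic module $\mathcal{W}(p).G(z)$ is forced to be $\Pi(p-1)$. No surjectivity argument, no character count, no appeal to Feigin--Frenkel type resolutions is needed; the $A(\mathcal{W}(p))$-module structure of the top already determines the answer. Your proposed route via graded dimensions would eventually work, but it is substantially harder and, as you yourself note, requires a nontrivial surjectivity statement that you do not prove.

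For (iv), your mechanism is the same as the paper's (non-splitness comes from $\mathcal{Q}$ acting nontrivially across the two pieces via Lemma~\ref{com-2}(i)), and for (ii)--(iii) the paper simply asserts the direct-sum decomposition as ``clear'' without the charge-sector bookkeeping you sketch; your version is more explicit but not essentially different.
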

\begin{proof}
It is clear that $\mathcal{V} = \mathcal{W}(p) \oplus  \mathcal{W} (p). G(z)$. So it remains to identify cyclic $\mathcal{W}(p)$--module $\mathcal{W}(p). G(z)$.
The locality relations
$$  (z_1 -z_2) ^{2p-1} [Y(x,z_1), G(z_2)] = 0, \quad (x \in \{E,F,H \} ),$$
imply that $\mathcal{W} (p). G(z)$ is a $\N$--graded $\mathcal{W}(p)$--module with lowest weight $1$. Top component is $2$--dimensional and spanned by $G(z)$ and $[{\Q},G(z)]$. By using representation-theoretic results from \cite{AM1} we see that this module is isomorphic to $\Pi(p-1)$, and that $E / \Pi(p-1) \cong \Lambda(1)$.
 The proof follows.
\end{proof}

\begin{remark}
We know that there is also a non-split extension
$$ 0 \rightarrow \Pi(p-1) \rightarrow V_{L + \a - \a/p} \rightarrow \Lambda(1) \rightarrow 0. $$
But, $V_{L + \a - \a/p} \ncong E$.
\end{remark}

The operators $\widetilde{G(z)}_n$, $n \in \Z$, define on $\mathcal{V}$ the structure of a module for the Heisenberg algebra such that $\widetilde{G(z)}_0$ acts trivially.
Therefore the field
$$ \Delta(\widetilde{G(z)},z_1) = z_1 ^ {\widetilde{G(z)}_0} \exp  \left(\sum_{n = 1} \frac{\widetilde{G(z)} _{n} }{-n} z_1 ^{-n}  \right) $$ is well defined on $\mathcal{V}$. As in \cite{AM5} we have the following result:

\begin{theorem}
Assume that $(M,Y_M(\cdot,z_1))$ is a weak ${\mathcal V}$--module. Define the pair $(\widetilde{M}, \widetilde{Y}_{\widetilde{M}}(\cdot, z_1) )$ such that
\bea &&  \widetilde{M}= M \qquad \mbox{ as a vector space}, \nonumber \\
&& \widetilde{Y}_{\widetilde M} (v(z),z_1) = Y_{ M} (\Delta(\widetilde{G(z)},z_1) v(z),z_1). \nonumber \eea

Then $(\widetilde {M}, \widetilde{Y}_{\widetilde{M}}(\cdot, z_1) )$
is a weak ${\mathcal V}$--module. In particular, $(\widetilde {M}, \widetilde{Y}_{\widetilde{M}}(\cdot, z_1) )$ is a ${\mathcal W}(p)$--module.
\end{theorem}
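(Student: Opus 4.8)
The plan is to obtain this as a direct application of Li's $\Delta$-operator construction, that is, of Theorem \ref{delta}, taking the ambient vertex algebra to be $\mathcal{V}$ and the distinguished weight-one vector to be $\widetilde{G(z)} \in \mathcal{V}$. First I would record the two properties of $\widetilde{G(z)}$ that make the construction legitimate. By Proposition \ref{tv-1}(2) the field $G(z)$ is primary of conformal weight one with respect to the Virasoro field $L(z)$ of $\mathcal{W}(p) \subset \mathcal{V}$ (indeed $L(z)_0 G(z) = G'(z)$, $L(z)_1 G(z) = G(z)$ and $L(z)_n G(z) = 0$ for $n \ge 2$), so $\widetilde{G(z)}$ is a weight-one primary vector of $\mathcal{V}$. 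Second, as noted just above the statement, the modes $\widetilde{G(z)}_n$ endow $\mathcal{V}$ with a Heisenberg action in which $\widetilde{G(z)}_0$ acts trivially; equivalently $\mathrm{Ker}_{\mathcal{V}}\,\widetilde{G(z)}_0 = \mathcal{V}$. Hence the factor $z_1^{\widetilde{G(z)}_0}$ in $\Delta(\widetilde{G(z)},z_1)$ is the identity on $\mathcal{V}$, so there is no logarithmic ambiguity, and the remaining exponential factor acts as a finite sum on each homogeneous component, since $\mathcal{V} \cong \mathcal{W}(p) \oplus \Pi(p-1)$ is graded from below with finite-dimensional weight spaces and $\widetilde{G(z)}_n$ lowers the weight by $n$ for $n \ge 1$. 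Thus $\Delta(\widetilde{G(z)},z_1)$ is a well-defined $\mathcal{V}$-valued operator series.

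With these facts in place, the statement is exactly Theorem \ref{delta} applied to $(V,v) = (\mathcal{V},\widetilde{G(z)})$, for which $\overline V = \mathrm{Ker}_{\mathcal{V}}\,\widetilde{G(z)}_0 = \mathcal{V}$: for any weak $\mathcal{V}$-module $(M,Y_M)$ the pair $(\widetilde M,\widetilde Y_{\widetilde M})$ with $\widetilde M = M$ and $\widetilde Y_{\widetilde M}(v(z),z_1) = Y_M(\Delta(\widetilde{G(z)},z_1)v(z),z_1)$ is again a weak $\mathcal{V}$-module. Restricting the vertex operators along the embedding $\mathcal{W}(p) \hookrightarrow \mathcal{V}$ then gives the asserted $\mathcal{W}(p)$-module structure on $\widetilde M$. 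I would add the remark that, by the corollary following Theorem \ref{delta}, if $L(0)$ acts semisimply on $M$ then $\widetilde M$ is a logarithmic $\mathcal{W}(p)$-module precisely when $\widetilde{G(z)}_0$ fails to act semisimply on $M$; this is the reason for the construction, as it manufactures logarithmic $\mathcal{W}(p)$-modules from modules twisted by the ``second power'' screening.

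The one point that genuinely requires care---and the main, if modest, obstacle---is that Theorem \ref{delta} as quoted is stated for a vertex operator algebra, whereas $\mathcal{V}$ is built here as the vertex algebra generated by the local system of fields $U$ acting on $M_2^{\pm}$. So I would check that Li's argument carries over verbatim in this framework: the inputs are the commutator and iterate (weak associativity) formulas for local systems together with the conjugation identities for $\Delta(v,z_1)$ from \cite{Li}, all available in the setting of local systems of vertex operators \cite{LL} and already used above in the construction of $\mathcal{V}$. Everything beyond that is the routine bookkeeping of \cite{AM5}.
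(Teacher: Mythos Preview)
Your proposal is correct and is exactly the approach the paper takes: the paper gives no separate proof but simply says ``As in \cite{AM5} we have the following result,'' which amounts to applying Theorem \ref{delta} with $V=\mathcal{V}$, $v=\widetilde{G(z)}$, and $\overline V=\mathcal{V}$ (since $\widetilde{G(z)}_0$ acts trivially on $\mathcal{V}$). Your added verification that $\widetilde{G(z)}$ is weight-one primary via Proposition \ref{tv-1}(2), and your remark that Li's argument goes through in the local-systems framework in which $\mathcal{V}$ is constructed, are precisely the details left implicit in the paper.
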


Recall that $M_2 ^{\pm}$ are modules for the vertex algebra ${\mathcal V}$ with the vertex operator map
$$ Y (v(z), z_0 ) = v(z_0), \qquad v(z) \in {\mathcal V}.$$
Applying the above construction we get a (new) explicit  realization of logarithmic modules for $\mathcal{W}(p)$.
 \begin{theorem}
 $ (\widetilde{M_2 ^{\pm} }, \widetilde{Y})$ is a $\mathcal{W}(p)$--module  such that
 $$\widetilde{Y}(v,z) =  Y(v,z) + \sum_{n=1} ^{\infty} \frac{G(z)_n Y(v,z)}{-n} (-z) ^{-n}, \quad v \in \mathcal{W}(p). $$
 In particular,
 $$\widetilde{Y} (\omega, z) = \widetilde{L}(z). $$
 The operator $\widetilde{L(0)}$ acts on $\widetilde{M_2 ^{\pm}}$ as  $$\widetilde{L(0)} = L(0) + G(0)$$
 and it has nilpotent rank $2$.
 \end{theorem}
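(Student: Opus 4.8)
\medskip
\noindent\textbf{Proof proposal.} The plan is to get the module structure for free and then do the work on the two explicit formulas and on the Jordan form of $\widetilde{L(0)}$. Since $(M_2^{\pm},Y)$ with $Y(v(z),z_0)=v(z_0)$ is a weak $\mathcal V$--module and $\widetilde{G(z)}_0$ acts as zero on $\mathcal V$, the deformation theorem stated just above (Li's $\Delta$--construction with the weight--one vector $\widetilde{G(z)}$) applies verbatim and gives that $(\widetilde{M_2^{\pm}},\widetilde Y)$ is a weak $\mathcal V$--module, hence a $\mathcal W(p)$--module; so it remains only to compute $\widetilde Y$ and $\widetilde{L(0)}$. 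The organizing observation I would use is that, on $M_2^{\pm}=\mathrm{Fock}_1\oplus\mathrm{Fock}_2$ (the two lattice cosets in the definition of $M_2^{\pm}$), the field $G(z)$ is strictly block--lower--triangular: it shifts the $\alpha$--charge by $-2\alpha/p$, which sends $\mathrm{Fock}_1$ to $\mathrm{Fock}_2$, while on $\mathrm{Fock}_2$ it acts by zero (the modes $a_{1/p+j}$ of $a=e^{-\alpha/p}$ entering $G(n)$ vanish there), whereas each $Y(v,z)$ with $v\in\mathcal W(p)$ preserves the $\alpha$--charge and is therefore block--diagonal.

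Next I would unwind $\Delta(\widetilde{G(z)},z_1)v$ for $v\in\mathcal W(p)\subset\mathcal V$. The prefactor $z_1^{\widetilde{G(z)}_0}$ is trivial, and I claim the exponential truncates after its linear term: in $\mathcal V$ one has $\widetilde{G(z)}_n v=G(z)_nY(v,z)$, the $n$--th product of the fields $G(z)$ and $Y(v,z)$, which by the observation above is again strictly block--lower--triangular, and any further $n$--th product of $G(z)$ with a strictly block--lower--triangular field is $0$ because the product of two such nilpotent $2\times 2$ blocks vanishes. Hence $\Delta(\widetilde{G(z)},z_1)v=v+\sum_{n\ge 1}\frac{G(z)_nY(v,z)}{-n}(-z_1)^{-n}$, and applying $Y_{M_2^{\pm}}(\cdot,z_1)$, which simply evaluates a field of $\mathcal V$, produces the stated formula $\widetilde Y(v,z)=Y(v,z)+\sum_{n\ge 1}\frac{G(z)_nY(v,z)}{-n}(-z)^{-n}$.

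Then I would specialize $v=\omega$: by Proposition~\ref{tv-1}(2) one has $L(z)_1G(z)=G(z)$, $L(z)_0G(z)=\partial G(z)$ and $L(z)_nG(z)=0$ for $n\ge 2$, so skew--symmetry of the $n$--th products yields $G(z)_1Y(\omega,z)=G(z)$ and $G(z)_nY(\omega,z)=0$ for $n\ge 2$, whence $\widetilde Y(\omega,z)=L(z)+z^{-1}G(z)$; reading off the modes, this is exactly $\sum_n(L(n)+G(n))z^{-n-2}$, i.e.\ $\widetilde Y(\omega,z)=\widetilde L(z)$ and $\widetilde{L(0)}=L(0)+G(0)$. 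Finally $[L(0),G(0)]=0$ by Proposition~\ref{tv-1}(1) at $n=0$, while $G(0)$ is strictly block--lower--triangular on $M_2^{\pm}$, so $G(0)^2=0$; since $L(0)$ is semisimple and commutes with the square--zero operator $G(0)$, $\widetilde{L(0)}$ has Jordan blocks of size at most two, and of size exactly two once one checks that $G(0)$ is nonzero on $M_2^{\pm}$ (a short computation on lowest weight vectors from the explicit mode formula for $G(0)$).

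The step I expect to be the main obstacle is the truncation of the $\Delta$--exponential, i.e.\ verifying inside the vertex algebra of fields $\mathcal V$ that all quadratic and higher terms really vanish. The strict block--lower--triangularity of $G(z)$ on $M_2^{\pm}$ handles it, but one has to be careful both that the formalism of $n$--th products of fields is compatible with this matrix book-keeping and that the underlying charge arithmetic of the cosets defining $M_2^{\pm}$ behaves as claimed (so the smallest values of $p$, where that arithmetic degenerates, may need minor ad hoc adjustments). Everything else is routine manipulation with Proposition~\ref{tv-1}.
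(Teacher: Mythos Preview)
Your proposal is correct. The paper does not actually give a proof of this theorem: it simply records it after the line ``Applying the above construction we get a (new) explicit realization of logarithmic modules for $\mathcal{W}(p)$,'' treating everything as an immediate consequence of the preceding $\Delta$-deformation theorem applied to the $\mathcal{V}$-module $M_2^{\pm}$. Your argument supplies precisely the details the paper leaves implicit. The key step you single out---truncation of the $\Delta$-exponential at the linear term---follows from your block-lower-triangularity observation for $G(z)$ on $M_2^{\pm}$, and is also encoded in the paper's structural result $\mathcal{V}\cong\mathcal{W}(p)\oplus\Pi(p-1)$: as fields on $M_2^{\pm}$, any two elements of the $\Pi(p-1)$ summand are strictly off-diagonal, so all their mutual $n$-th products vanish. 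Your derivation of $\widetilde{Y}(\omega,z)=\widetilde{L}(z)$ via the primarity relations in Proposition~\ref{tv-1}(2) and of the nilpotent rank via $G(0)^2=0$, $[L(0),G(0)]=0$, $G(0)\neq 0$ is correct as well. The small-$p$ caveat you flag is genuine: the vanishing of the modes $a_{1/p+j}$ on the second Fock summand relies on $4/p\notin\mathbb{Z}$, so $p=2,4$ need a separate (easy) check.
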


\begin{remark}By applying the methods developed in  \cite{AM5} we see that
  $\widetilde{M_2 ^{\pm}}$ are self-dual, logarithmic modules of semisimple rank three. Moreover,  $\widetilde{M_2 ^{+}}$ (resp. $\widetilde{M_2 ^{-}}$ ) is projective cover of $\Lambda(2)$ (resp. $\Pi(p-2)$). The same modules have been constructed in \cite{NT} by using a slightly different method.
  \end{remark}

\section{Conclusion}

We hope that we have conveyed the main ingredients  behind the plethora of $\mathcal{W}$-algebras connected to Logarithmic
Conformal Field Theory. There are still numerous problems to be resolved at the structural
level (e.g. $C_2$-cofiniteness), but we hope that the present techniques in vertex algebra theory - with further constructions as in Chapter 6 - are sufficient to
resolve the main conjectures in the paper, including construction of projective covers. Eventually this development on the vertex algebra
side will play an important role in finding a precise relationship with the finite-dimensional quantum groups at root of unity proposed in  \cite{FGST1}-\cite{FGST3}, \cite{FT}.

There are several aspects of $C_2$-cofinite $W$-algebras that we did not discuss in this paper. Here we briefly outline on these developments.

\begin{itemize}

\item[(1)] There is an important (simple-current) extension of the triplet vertex algebra $\mathcal{W}(p)$, called  the doublet $\mathcal{A}(p)$. If $p$ is even, that $\mathcal{A}(p)$ carries the structure of a vertex algebra (or vertex superalgebra). Its representation
theory has been developed in \cite{AM10}. This extension can be constructed in the higher rank as well. Also, a large portion of the present work extends to $N=1$ vertex operator superalgebras.

\item[(2)]  We expect to see rich combinatorics underlying $\mathcal{W}(p)_Q$, including properties of graded dimensions of modules and of some distinguished subspaces examined in \cite{MP2}. Another important facet of the theory was initiated  in \cite{AM6}, \cite{AM8} in connection to constant term identities of Morris-Macdonald type (see also \cite{CLWZ}). These identities are expected to play a role in the theory of higher Zhu's algebras.

\item[(3)]  Modular invariance and one-point functions on the torus are important ingredients in CFT \cite{Zh}.
In \cite{AM4} (cf. also \cite{F}) we have shown that the space of one-point functions for $\mathcal{W}(p)$ is $3p-1$ dimensional.
But in view of \cite{Miy1}, it is not completely obvious how to describe the space of one-point functions explicitly
via certain pseudotraces. For $\mathcal{W}_{p,p'}$ we still do not know precisely even its dimension, although there is
an obvious guess by looking at the properties of irreducible characters \cite{FGST1}-\cite{FGST3}. One-point functions for
the $C_2$-cofinite vertex algebra $SF^+$  coming from {\em symplectic fermions} \cite{Abe} have been recently studied in \cite{AN}.
Some general results about "logarithmic modular forms" are obtained in \cite{KM}.

\item[(4)] There is ongoing effort in the direction of constructing the {\em full} rational conformal field theory \cite{HK1}-\cite{HK2}.
Although it is not clear how to generalize the notion of full field algebra to general $C_2$-cofinite vertex algebras, some progress has been
achieved recently on the construction of the bulk space, in the case of the triplet vertex algebra $\mathcal{W}(p)$ and $\mathcal{W}_{2,3}$ \cite{GR}, \cite{GRW-1}, \cite{GRW-2}, (cf. also \cite{W} for general $p$ and $p'$). 

\end{itemize}

\end{document}